\def\@seccntDot{.}
\def\@seccntformat#1{\csname the#1\endcsname\@seccntDot\hskip 0.5em}
\renewcommand\section{\@startsection{section}{1}{\z@}%
{18\p@ \@plus 6\p@ \@minus 3\p@}%
{9\p@ \@plus 6\p@ \@minus 3\p@}%
{\large\bfseries\boldmath}}
\renewcommand\subsection{\@startsection{subsection}{2}{\z@}%
{15\p@ \@plus 6\p@ \@minus 3\p@}%
{6\p@ \@plus 6\p@ \@minus 3\p@}%
{\itshape}}
\renewcommand\subsubsection{\@startsection{subsubsection}{3}{\z@}%
{12\p@ \@plus 6\p@ \@minus 3\p@}%
{\p@}%
{}}
\theoremstyle{plain}
\newtheorem{theorem}{Theorem}[section]
\newtheorem{lemma}{Lemma}[section]
\newtheorem{problem}{Problem}[section]
\newtheorem{corollary}{Corollary}[section]
\theoremstyle{definition}
\newtheorem{claim}{Claim}[section]
\numberwithin{equation}{section}
\DeclareMathOperator{\vol}{Vol}
\begin{document}

\title{Maximum spread of $K_r$-minor free graphs\thanks{Supported by National Natural 
Science Foundation of China (12171002, 12331012, 12471320), Excellent University 
Research and Innovation Team in Anhui Province (2024AH010002), and 
Anhui Provincial Natural Science Foundation (2408085Y003).} }
\author{Wenyan Wang, Lele Liu, Yi Wang\thanks{Corresponding author: wangy@ahu.edu.cn} \\
{\small  \it School of Mathematical Sciences, Anhui University, Hefei 230601, P. R. China}
}

\date{}
\maketitle

\begin{abstract}
The spread of a graph is the difference between the largest and smallest
eigenvalue of its adjacency matrix. In this paper, we investigate spread problems for 
graphs with excluded clique-minors. We show that for sufficiently large $n$, the $n$-vertex 
$K_r$-minor free graph with maximum spread is the join of a clique and an independent set, 
with $r-2$ and $n-r+2$ vertices, respectively.
\end{abstract}

\section{Introduction}

Consider an $n \times n$ complex matrix $M$ with eigenvalues $\lambda_1,\lambda_2,\ldots,\lambda_n$. 
The \emph{spread} $s(M)$ of $M$ is defined as $s(M) = \max_{i,j}{|\lambda_i - \lambda_j|}$,
which reflects the largest distance between any two eigenvalues of the matrix. The concept 
of spread was first introduced by Mirsky \cite{Mirsky1956} in 1956, with further significant 
results appearing in \cite{ML}. Since then, the concept of spread has attracted significant 
attention from researchers; see, for example, \cite{Deutsch1978,Johnson1985,NT,TRC}.

The spread of a matrix has also drawn interest in specific cases. Let $G$ be a simple 
undirected graph of order $n$. The adjacency matrix of $G$, denoted by $A(G)$, is 
an $n\times n$ matrix whose rows and columns are indexed by the vertices of $G$. 
The $(u, v)$-entry of $A(G)$ is $1$ if $u$ and $v$ are adjacent, and $0$ otherwise.
Since $A(G)$ is symmetric and real, its eigenvalues are real and can be ordered 
as $\lambda_1(G)\geq\lambda_2(G)\geq\cdots\geq\lambda_n(G)$. In the context of 
the adjacency matrix $A(G)$, the spread is simply the difference between the 
largest and smallest eigenvalues, denoted by
\[
s(G) := \lambda_1(G) - \lambda_n(G).
\] 

Compared with the widely studied largest and smallest eigenvalues, the spread of graphs contains more information about the distribution of eigenvalues and reflects more of the global structure of graphs.
The systematic study of the spread of graphs was initiated by Gregory, Hershkowitz, 
and Kirkland \cite{GHK}. Since then, the spread of graphs has been widely investigated. 
A major focus in this area is to maximize or minimize the spread over a fixed family 
of graphs and characterize the corresponding extremal graphs that achieve these bounds. 
In 2001, Gregory, Hershkowitz, and Kirkland \cite{GHK} made two significant conjectures. 
They first conjectured that the $n$-vertex graph with maximum spread is given by 
$K_{\lfloor2n/3\rfloor} \vee \lceil n/3\rceil K_1$, the join of the clique on 
$\lfloor2n/3\rfloor$ vertices and an independent set on $\lceil n/3\rceil$ vertices.
They also conjectured that if $G$ maximizes spread over all $n$-vertex graphs with $m\leq \lfloor n^2/4 \rfloor$ edges, 
then $G$ must be bipartite. Subsequently, many scholars contributed to these two conjectures, 
such as unicyclic graphs \cite{FWG, LZZ,  WS}, $\infty$-bicyclic graphs 
(the cycles containing two edges that do not intersect) \cite{Z}, bicyclic graphs \cite{WZS}, 
the family of all $n$-vertex graphs \cite{BRTJ}, graphs with a given matching number \cite{LZZ}, 
girth \cite{FWG, WZS}, or size \cite{LL}, very recently for the family of planar and outerplanar graphs 
\cite{GOBT, LLLW, LLW}. We would like to mention that Breen, Riasanovsky, Tait, and 
Urschel \cite{BRTJ} confirmed these two conjectures.

A minor of a graph $G$ is a graph obtained from $G$ by means of a sequence of vertex 
deletions, edge deletions, and edge contractions.
A graph is said to be $H$-minor free, if it does not contain $H$ as a minor.
Minors play a key role in graph theory, and extremal problems
on forbidding minors have attracted appreciable amount of
interest in the past decades, particularly in the case of $K_r$-minors. 
A natural way to construct a large $K_r$-minor free graph is by taking the join of $K_{r-2}$ with $n - r + 2$ independent vertices. This yields a $K_r$-minor free graph with $(r-2)n - \binom{r-1}{2}$ edges. Mader \cite{M} proved that this construction achieves the maximum possible size for $r\leq 7$.
However, this expression no longer holds for $r\geq 8$.

From spectral perspective, spectral extremal problems in graph theory, specifically for graphs excluding certain minors, have been extensively studied in recent decades. 
These investigations cover families such as outerplanar graphs \cite{LN, TT}, planar graphs 
\cite{BR, CV, DM, EZ,  TT}, and $K_r$-minor free graphs \cite{T}.
Notably, Tait \cite{T} investigated the spectral radius of $K_{s,t}$-minor free graphs and posed a conjecture regarding the structure of graphs with the maximum spectral radius within this class. This conjecture was later resolved by Zhai and Lin \cite{ZL}. Recently, Zhai, Fang, and Lin \cite{Zhai-Fang-Lin2024} study some unified phenomena in graphs with maximum spectral radius and excluded general minors.

The main goal of this paper is to contribute to the study of the spread in graphs with excluded minors.

\begin{problem}
Let $H$ be a graph. What is the maximum spread of an $H$-minor free graph of order $n$?
\end{problem}

In recent years, there have been notable developments on this problem.  
Gotshall, O'Brien, and Tait \cite{GOBT} showed that 
for $n$ sufficiently large, the $n$-vertex outerplanar graph with maximum spread is a vertex 
joined to a linear forest. They further conjectured that the extremal 
graph is $K_1\vee P_{n-1}$. Recently, Li, Linz, Lu, and Wang \cite{LLLW} disproved this conjecture by showing that the extremal graph is instead 
$K_1 \vee ( P_{\lceil (2n-1)/3 \rceil} \cup \lfloor (n-2)/3 \rfloor K_1)$. They also proved that the unique planar graph attaining the maximum spread is given by 
$(K_1\cup K_1)\vee(P_{\lceil(2n-2)/3\rceil}\cup\lfloor(n-4)/3\rfloor K_1)$. Furthermore, 
Linz, Lu, and Wang \cite{LLW} determined the maximum spread over all $K_{2,t}$-minor 
free graphs on $n$ vertices for $t \geq 2$ and $n$ sufficiently large. Very recently, 
they extended this result to $K_{s,t}$-minor free graphs for $t \geq s\geq2$ \cite{LLW2}.

In this paper, inspired by the results in \cite{BGWLL, GOBT, LLLW, LLW, LLW2},
we determine the unique graph attaining the maximum spread among $n$-vertex $K_{r}$-minor 
free graphs. Our main result is stated as follows.

\begin{theorem}\label{thm:spread-kr-minor-free}
Let $G$ be a $K_r$-minor free graph of order $n$. For $r\geq3$ and $n$ sufficiently large, we have
\[
s(G)\leq\sqrt{4(r-2)(n-r+2)+(r-3)^2},
\]
with equality if and only if $G\cong K_{r-2}\vee(n-r+2)K_1$. 
\end{theorem}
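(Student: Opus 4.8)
The plan is to take a graph $G$ of maximum spread among $n$-vertex $K_r$-minor free graphs and to recover its structure from its two extreme eigenvectors, using $K_r$-minor freeness and the extremality of the spread at each step. Write $s_0:=\sqrt{4(r-2)(n-r+2)+(r-3)^2}$. Since $K_{r-2}\vee(n-r+2)K_1$ is $K_r$-minor free (it has a $K_{r-1}$ minor but no $K_r$ minor) and has spread exactly $s_0$, we have $s(G)\ge s_0$, and it suffices to prove $G\cong K_{r-2}\vee(n-r+2)K_1$. Fix a positive unit eigenvector $\mathbf{x}$ for $\lambda_1:=\lambda_1(G)$ and a unit eigenvector $\mathbf{z}$ for $\lambda_n:=\lambda_n(G)$, so that $s(G)=\lambda_1-\lambda_n=2\sum_{uv\in E(G)}(x_ux_v-z_uz_v)$. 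A guiding remark: in the conjectured extremal graph both $\mathbf{x}$ and $|\mathbf{z}|$ are of constant order (depending only on $r$) on the $r-2$ clique vertices and of order $n^{-1/2}$ on the independent set, while $\mathbf{z}$ has opposite signs on the two parts; the whole argument amounts to recovering this dichotomy.

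First I would record the coarse estimates. Since $G$ is $K_r$-minor free it has $O_r(n)$ edges (by Mader \cite{M} for $r\le7$ and Kostochka--Thomason in general), so $\lambda_1\le\sqrt{2e(G)}=O_r(\sqrt n)$; with $-\lambda_n\le\lambda_1$ and $s(G)\ge s_0=\Theta(\sqrt n)$ this gives $\lambda_1,-\lambda_n=\Theta(\sqrt n)$ and in particular $\lambda_1\ge s_0/2$. On the other side, the sharp bound on the spectral radius of $K_r$-minor free graphs---the spectral-radius analogue of the present theorem, cf.\ \cite{Zhai-Fang-Lin2024}---gives $\lambda_1\le\rho:=\tfrac12(r-3+s_0)$, with equality only for $K_{r-2}\vee(n-r+2)K_1$. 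Hence $\lambda_1$ is within an $O_r(1)$ additive error of its largest possible value, and $-\lambda_n\ge s(G)-\lambda_1\ge s_0-\rho=\tfrac12(s_0-(r-3))=:\nu>0$, which is exactly the value $-\lambda_n$ attains in the conjectured graph.

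The core is a localisation-and-cleaning step. Using the entrywise inequality $\lambda_1x_v=\sum_{u\sim v}x_u\le\sqrt{d(v)}$ (and its analogue for $\mathbf{z}$), the sparsity $e(G)=O_r(n)$, and the near-optimality of $\lambda_1$, I would show that there is a set $D\subseteq V(G)$ with $|D|=O_r(1)$ that carries almost all of the mass of $\mathbf{x}$ and of $\mathbf{z}$, that every vertex outside $D$ has $x$- and $|z|$-value $O_r(n^{-1/2})$, that $G-D$ is nearly edgeless, and that $\mathbf{z}$ is (essentially) of one sign on $D$ and the opposite sign on the bulk of $V(G)\setminus D$. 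Extremality is then exploited through local modifications: since adding a non-edge $uv$ changes $s(G)$ by at least $2(x_ux_v-z_uz_v)$, any non-edge $uv$ with $x_ux_v>z_uz_v$ whose addition keeps $G$ $K_r$-minor free is impossible; combined with exchange arguments (deleting an edge of $G-D$ and reattaching its endpoint to $D$) for the remaining cases, and with the observation that an edge inside $V(G)\setminus D$ whose endpoints are completely joined to a clique $D$ of size $\ge r-2$ already forms a $K_r$ minor, this forces $V(G)\setminus D$ to be independent, $D$ to be a clique, and $D$ to be complete to $V(G)\setminus D$; thus $G=K_{|D|}\vee(n-|D|)K_1$. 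Finally, $K_t\vee(n-t)K_1$ has spread $\sqrt{(t-1)^2+4t(n-t)}$, which is strictly increasing in $t$ for $t\le(2n-1)/3$, and it contains a $K_{t+1}$ minor, so it is not $K_r$-minor free once $t\ge r-1$; together with $s(G)\ge s_0$ this pins $|D|=r-2$, i.e.\ $G=K_{r-2}\vee(n-r+2)K_1$.

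I expect the main obstacle to be the localisation-and-cleaning step, in particular proving that $V(G)\setminus D$ is independent and that $\mathbf{z}$ has the stated sign pattern. This is where $K_r$-minor freeness must be used quantitatively rather than merely as an edge count, and it is where the two-sided nature of the problem becomes unavoidable: unlike for the spectral radius, where one optimises $\lambda_1$ alone, here a modification that raises $\lambda_1$ may lower $-\lambda_n$, so the exchange inequalities have to keep both $\mathbf{x}$ and $\mathbf{z}$ under control simultaneously, and the near-optimality of $\lambda_1$ must be played against the lower bound $-\lambda_n\ge\nu$. A secondary burden is making all error terms uniform in $r$ so that one threshold ``$n$ sufficiently large'' (a function of $r$) suffices throughout.
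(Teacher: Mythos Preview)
Your structural programme---localise the mass of $\mathbf{x}$ and $\mathbf{z}$ onto a bounded set $D$, show the complement is independent, and make $D$ complete to the rest via exchange arguments---matches the paper's Section~\ref{sec3} (its set $L$ plays the role of your $D$). The paper works rather harder than your sketch suggests: it argues throughout via the sign partition $V_+,V_-$ of $\mathbf{z}$, and it pins down $|L|=r-2$ \emph{early} (Lemma~\ref{lem:large-degree}(a)) via a direct contraction argument, before any exchange step. Your final monotonicity argument for $|D|$ is fine, but it only applies once you know $G[D]$ is a clique.

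That is the real gap. Your edge-addition test ``add $uv$ if $x_ux_v>z_uz_v$ and the result is still $K_r$-minor free'' does not decide non-edges \emph{inside} $D$. For $u,v\in D$ the entries $x_u,x_v$ and $z_u,z_v$ are all of the same leading order and of the same sign (on $D$, $\mathbf{z}$ has one sign), so $x_ux_v-z_uz_v$ is a lower-order quantity whose sign your coarse estimates do not control; the same objection applies to your circular use of ``$D$ a clique of size $\ge r-2$'' to exclude edges in $V(G)\setminus D$. A first-variation argument therefore cannot force $G[D]$ to be complete: the trade-off between $\lambda_1$ and $-\lambda_n$ on $D$ is genuinely delicate. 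The paper resolves this by a separate computation: once $G=G[L]\vee(n-r+2)K_1$ with $|L|=r-2$ is established, it expands $\lambda_1$ and $\lambda_n$ as Laurent series in $\gamma_{n,r}=\sqrt{(r-2)(n-r+2)}$ (Lemma~\ref{lem:spread-series-expansion}) to obtain
\[
s(G)=2\gamma_{n,r}+\frac{1}{(r-2)\,\gamma_{n,r}}\Bigl(\ell_2-\tfrac{3}{4(r-2)}\ell_1^2\Bigr)+O(\gamma_{n,r}^{-3}),
\]
with $\ell_1=\sum_{v\in L}d_{G[L]}(v)$ and $\ell_2=\sum_{v\in L}d_{G[L]}(v)^2$, and then invokes a de~Caen--Das bound on $\ell_2$ to show the bracketed term is uniquely maximised at $G[L]=K_{r-2}$. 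Your proposal lacks this (or any equivalent) second-order comparison, and without it the step ``$D$ is a clique'' is unsupported.
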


The rest of the paper is organized as follows. In Section \ref{sec2}, some necessary notations 
and lemmas are provided. In Section \ref{sec3}, we prove that the $K_{r}$-minor free graphs $G$ with maximum spread must contain $K_{r-2, n-r+2}$ as a spanning subgraph for $n$ sufficiently large and $r\geq 3$. 
In Section \ref{sec4}, using 
the novel Laurent series expansion method developed by Li, Linz, Lu, and Wang \cite{LLLW}, 
we show that the maximum spread over all $n$-vertex $K_r$-minor free graphs is achieved 
by $K_{r-2}\vee(n-r+2)K_1$, thereby completing the proof of Theorem \ref{thm:spread-kr-minor-free}.

\section{Preliminaries}
\label{sec2}

In this section we introduce definitions and notation that will be used throughout the paper, 
and prove some preliminary lemmas.

\subsection{Notation}
Consider a simple graph $G$ of order $n$, the number of edges of $G$ is called its size, 
and denoted by $e(G)$. Given a subset $X$ of the vertex set $V(G)$ of $G$, the
subgraph of $G$ induced by $X$ is denoted by $G[X]$, and the graph obtained from $G$ by
deleting $X$ is denoted by $G\setminus X$. As usual, for a vertex $v$ of $G$ we write $d_G(v)$ 
and $N_G(v)$ for the degree of $v$ and the set of neighbors of $v$ in $G$, respectively. 
If the underlying graph $G$ is clear from the context, we use the notations $d(v)$ and $N(v)$. 
Let $N_X(v)$ denote the set of vertices in $X$ that are adjacent to $v$, i.e., $N_X(v) = N_G(v) \cap X$.
Let $G \cup H$ denote the disjoint union of $G$ and $H$. The join $G \vee H$ of disjoint 
graphs of $G$ and $H$ is the graph obtained from $G \cup H$ by joining each vertex of $G$ 
to each vertex of $H$. 

Let $I_n$ denote the identity matrix of order $n$, $\bm{1}_n$ denote the all-ones vector of 
length $n$, and $J_{p\times q}$ denote the all-ones matrix of dimensions $p\times q$. In the 
above notation, we will skip the subscripts when they are clear from context.
For a positive integer $n$, let $[n]$ denote the set $\{1, 2,\ldots, n\}$.


Let $\bm{y}$ be an eigenvector of $G$ corresponding to an eigenvalue $\lambda$ of $G$. 
For each $v \in V(G)$, we have
\[
\lambda y_v = \sum_{u\in N(v)} y_u,
\]
and refer it as the \emph{eigen-equation} with respect to $v$ and $\lambda$.

\subsection{Useful results}
This subsection collects some necessary results required for this paper.

In 1967, Mader \cite{M2} showed a result on the number of edges in $H$-minor free graphs.

\begin{theorem}[\cite{M2}]\label{thm:edge-kr-minor-free}
Let $G$ be an $n$-vertex graph. For every graph $H$, if $G$ is $H$-minor free, then there exists
a constant $C$ such that
\[
e(G) \leq Cn.
\]
\end{theorem}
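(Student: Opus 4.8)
The plan is to prove this by induction on $n$, and then to extract a workable constant $C$ depending only on $H$. The key structural input is the standard observation that $H$-minor-free graphs have bounded \emph{degeneracy}: there is a constant $d = d(H)$ such that every subgraph of an $H$-minor-free graph contains a vertex of degree at most $d$. Once this is established, the edge bound follows immediately: if every subgraph has a vertex of degree $\le d$, then repeatedly deleting such a vertex shows $e(G) \le dn$, so we may take $C = d(H)$.

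The main work is therefore to prove the degeneracy bound. Let $H$ have $h$ vertices. First I would reduce to the case $H = K_h$, since $H$ is a minor of $K_h$, so every $K_h$-minor-free graph is automatically $H$-minor-free; thus it suffices to bound the degeneracy of $K_h$-minor-free graphs. Now I would argue by induction on $n = |V(G)|$ that a $K_h$-minor-free graph has average degree less than $2 c_h$ for some constant $c_h$ (equivalently $e(G) \le c_h n$), which gives a vertex of degree $< 2c_h$, hence degeneracy at most $2c_h - 1$. The inductive step: suppose toward a contradiction that $G$ is $K_h$-minor-free with $e(G) > c_h n$ and minimal order among such graphs. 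By minimality every proper subgraph $G'$ on $n'$ vertices satisfies $e(G') \le c_h n'$; in particular $G$ has minimum degree $\ge c_h$ (deleting a low-degree vertex would contradict minimality) and $G$ is connected. Pick an edge $uv$ and look at the common neighborhood $N(u) \cap N(v)$; if it is small one can find an edge whose contraction loses few edges and recurse, while if every edge lies in many common-neighbor triangles, a standard averaging/greedy argument produces a large highly-connected minor, eventually a $K_h$-minor, giving the contradiction. Making the constant $c_h$ explicit here is routine but slightly delicate; the cleanest route is to cite the classical bound (Mader, Kostochka, Thomason) that $K_h$-minor-free graphs have at most $c\, h\sqrt{\log h}\, n$ edges, but since the statement only asks for \emph{some} constant $C$, even the crude bound $c_h = 2^{h-2}$ obtained by the elementary contraction argument suffices.

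For the elementary $c_h = 2^{h-2}$ bound, the clean induction is on $h$: a graph with no $K_2$-minor has no edges ($c_2 = 1$ works, or indeed any value); assuming every $K_{h-1}$-minor-free graph has at most $2^{h-3} n$ edges, let $G$ be $K_h$-minor-free on $n$ vertices with $e(G) \ge 2^{h-2} n$. Then $G$ has a subgraph $G_0$ with minimum degree at least $2^{h-2}$ (greedily delete vertices of degree $< 2^{h-2}$; this removes fewer than $2^{h-2} n$ edges in total so $G_0$ is nonempty). In $G_0$, contract a connected subgraph spanning a neighborhood-rich set so that the contracted vertex, together with its new neighborhood, induces a dense graph; one shows the contracted graph restricted to that neighborhood is $K_{h-1}$-minor-free yet has average degree large enough to contradict the inductive hypothesis — hence $G_0$, and so $G$, has a $K_h$-minor. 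I should be careful that the contraction step genuinely preserves enough edges; the correct formulation picks a vertex $x$ of $G_0$, restricts to $N_{G_0}(x)$, and observes that $G_0[N(x)]$ must be $K_{h-1}$-minor-free (else $x$ plus that minor gives a $K_h$-minor), while $|N(x)| \ge 2^{h-2}$; iterating the minimum-degree reduction inside $G_0[N(x)]$ and applying induction yields the bound.

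\textbf{Main obstacle.} The genuine difficulty is the contraction/averaging argument that extracts a $K_h$-minor from a graph of large average degree — i.e., proving the extremal function $\mathrm{ex}(n, K_h\text{-minor}) = O(n)$ with an honest constant. Everything else (reducing general $H$ to $K_h$, passing from an edge bound to degeneracy, and from degeneracy to $e(G)\le Cn$) is formal. Since the statement only requires existence of the constant $C = C(H)$, I would present the induction-on-$h$ version giving $C(H) \le 2^{|V(H)|-2}$, and remark that sharper constants are available in the literature; this keeps the proof self-contained while sidestepping the quantitatively optimal (and considerably harder) Mader–Kostochka–Thomason bound.
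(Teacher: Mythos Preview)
The paper does not give its own proof of this statement; Theorem~\ref{thm:edge-kr-minor-free} is simply cited from Mader~\cite{M2} and used as a black box. So there is nothing to compare your argument against in the paper itself. Your outline is essentially Mader's original inductive argument, and it is the standard way to prove the result from scratch.

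Two points are worth fixing. First, the reduction to $H=K_h$ is stated backwards. You write ``every $K_h$-minor-free graph is automatically $H$-minor-free,'' which is false (take $G=H$ a tree on $h$ vertices: it is $K_h$-minor-free but trivially has an $H$-minor). What you need, and what is true, is the opposite containment: since $H$ is a minor of $K_h$, any graph containing a $K_h$-minor also contains an $H$-minor, hence every $H$-minor-free graph is $K_h$-minor-free. That is the direction that lets you transfer an edge bound for $K_h$-minor-free graphs to $H$-minor-free graphs.

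Second, the inductive step as you sketch it does not quite close. Passing to a subgraph $G_0$ of minimum degree $\ge 2^{h-2}$ by deleting low-degree vertices is fine, but then looking at $G_0[N(x)]$ for an arbitrary vertex $x$ gives you no control on the minimum degree (or edge count) of that neighborhood, so you cannot directly apply the inductive hypothesis. Mader's actual argument takes a \emph{minor} $G'$ of $G$ that is minimal subject to $e(G')\ge 2^{h-3}|V(G')|$; minimality under edge contraction then forces every edge $uv$ of $G'$ to lie in at least $2^{h-3}$ triangles (otherwise contracting $uv$ would preserve the density bound). This is what guarantees that $G'[N(x)]$ has minimum degree $\ge 2^{h-3}$, and only then does the induction on $h$ go through. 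Your paragraph beginning ``Pick an edge $uv$ and look at the common neighborhood'' gestures at this, but the later paragraph with $G_0$ obtained only by vertex deletions loses it; you should merge the two and make the contraction-minimality explicit.
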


In 2007, Thomason \cite{TA3} provided an upper bound on the size of $K_{s,t}$-minor
free bipartite graphs.

\begin{theorem}[\cite{TA3}]\label{thm:edge-kst-minor-free-bipartite}
Let $G$ be an $n$-vertex $K_{s,t}$-minor free bipartite graph with two vertex partitions $A$ 
and $B$, where $|A|\gg |B|>0$. Then
\[ 
e(G)\leq (s-1)\cdot|A| + 4^{s+1} s!t \cdot |B|.
\]
\end{theorem}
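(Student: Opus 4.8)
The plan is to first pass to a cleaned subgraph. Delete from $A$, one vertex at a time, every vertex of degree at most $s-1$; since deleting a vertex of $A$ never lowers the degree of another vertex of $A$ (all neighbours of an $A$-vertex lie in $B$), this removes at most $(s-1)|A|$ edges in total. Write $G'$ for the resulting graph and $A'\subseteq A$ for its surviving $A$-side, so that every vertex of $A'$ has degree at least $s$ in $G'$. Since $G'$ is a subgraph of $G$ it is still $K_{s,t}$-minor free, so it suffices to prove $e(G')\le 4^{s+1}s!\,t\,|B|$; I will assume for contradiction that $e(G')>4^{s+1}s!\,t\,|B|$ and exhibit a $K_{s,t}$-minor in $G'$.

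The $K_{s,t}$-minor will consist of $s$ singleton branch sets $\{q_1\},\dots,\{q_s\}$ with $q_1,\dots,q_s\in B$ on one side, and on the other side $t$ pairwise disjoint connected subgraphs $T_1,\dots,T_t$ of $G'\setminus\{q_1,\dots,q_s\}$, each adjacent to every $q_i$. A convenient model for $T_j$ is a \emph{broom}: a hub $c_j\in B\setminus\{q_1,\dots,q_s\}$ together with, for every $i\in[s]$, a leaf $a_{j,i}\in A'$ that is a common neighbour of $q_i$ and $c_j$; provided all the leaves $a_{j,i}$ (over $i\in[s],\,j\in[t]$) are distinct, the sets $\{q_1\},\dots,\{q_s\},T_1,\dots,T_t$ indeed form a $K_{s,t}$-minor model. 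Hence it is enough to find distinct vertices $q_1,\dots,q_s,c_1,\dots,c_t\in B$ with $|N(q_i)\cap N(c_j)|\ge st$ for all $i\in[s]$ and $j\in[t]$, since then the $st$ leaves can be chosen one by one, each from a set of size at least $st$ minus the number used so far. (When $|B|<s+t$ this template lacks enough vertices; that range is bounded-like and will be handled separately, either by a thriftier choice of the $T_j$ that uses vertices of $\bigcap_{i}N(q_i)$ as hub-free branch sets, or by an ad hoc argument, contributing nothing new in spirit.)

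The vertices $q_1,\dots,q_s$ and $c_1,\dots,c_t$ are located by an iterative procedure fuelled by the minimum-degree condition $\delta_{A'}\ge s$. One maintains a large ``active'' set $W\subseteq A'$, initially $W=A'$, and repeatedly selects a vertex $q$ of the not-yet-chosen part of $B$ whose neighbourhood meets $W$ in as large a set as possible, adds $q$ to the list, and replaces $W$ by $W\cap N(q)$; such a $q$ has a large neighbourhood in $W$ because every vertex of the current $W$ still retains several neighbours in the not-yet-chosen part of $B$, by the minimum-degree hypothesis. Running this for $s$ rounds produces $q_1,\dots,q_s$ together with a still-large set $W_s=A'\cap N(q_1)\cap\dots\cap N(q_s)$; the same bookkeeping, now applied inside $W_s$, then yields the hubs $c_1,\dots,c_t$ with the required codegrees. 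The factor $4$ and the factorial $s!$ in the constant are exactly the product of the bounded losses incurred over these rounds.

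The main difficulty — and where essentially all the work lies — is arranging the iteration so that the loss in $|B|$ is only linear, i.e.\ so that the threshold forced on $e(G')$ is $O_{s,t}(|B|)$ rather than the $O_{s,t}(|B|^{s})$ that a naive implementation produces. (Naively one would pass from $G'$ to the bipartite graph induced on $N(q_1)$, retaining only $\ge(s-1)\,d(q_1)$ edges, and $d(q_1)$ is only guaranteed to exceed the average degree — a constant — rather than a constant multiple of $|B|$.) Forcing a linear loss uses crucially that $A$ is independent and that $|A|\gg|B|$: the broom hubs must be drawn from the scarce side $B$, so the iteration has to be organised so that the minimum-degree condition \emph{replenishes} the edge count at each round instead of that count being merely inherited from the previous round, and one must keep precise track of the at most $st$ leaves that have to stay distinct. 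Carrying out this bookkeeping, and checking that the numerical inequalities close with the stated constant $4^{s+1}s!$, is the technical heart of the proof; the cleaning reduction and the broom template are routine by comparison.
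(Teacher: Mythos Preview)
The paper does not prove this theorem at all: it is quoted verbatim from Thomason~\cite{TA3} and used as a black box (see the sentence preceding the statement and the absence of any proof environment). So there is no in-paper argument to compare your proposal against.

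As for the proposal itself, it is a plan rather than a proof. The cleaning reduction and the broom template are fine and standard, but you explicitly defer the one step that carries all the content: showing that the iterative selection can be organised so that the edge/degree bookkeeping closes with a bound linear in $|B|$ and with the precise constant $4^{s+1}s!\,t$. Your own naive calculation (pick $q_i$ by averaging, intersect, repeat) loses a factor of $|B|$ at each of the $s$ rounds and lands at $O_{s,t}(|B|^{s})$, exactly as you acknowledge; the sentence ``the iteration has to be organised so that the minimum-degree condition \emph{replenishes} the edge count at each round'' is a description of what must happen, not a mechanism for making it happen. Likewise the $|B|<s+t$ case is waved away. Until those two pieces are written out, the proposal does not constitute a proof, and whether the broom scheme with the specific constant $4^{s+1}s!$ actually closes cannot be assessed from what you have written.
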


In 2004, Hong \cite{H} determined the unique graph with the maximum spectral radius among 
$K_5$-minor free graphs. More recently, Tait \cite{T} extended Hong's result to $K_r$-minor free graphs by proving 
the following theorem.

\begin{theorem}[\cite{T}]\label{thm:spectral-kr-minor-free}
Let $G$ be the $n$-vertex $K_r$-minor free graphs with $r\geq3$. If $n$ is large enough, 
then 
\[
\lambda_1(G) \leq\frac{1}{2}\left(r-3+\sqrt{4(r-2)(n-r+2) + (r-3)^2}\right),
\]
with equality holds if and only if $G\cong K_{r-2}\vee (n - r + 2) K_1$.
\end{theorem}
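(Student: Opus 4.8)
The plan is to follow the now-standard ``stability + local-structure'' strategy for spectral-radius minor problems, exactly as Tait does in \cite{T}, so the final statement is really a recollection of his argument. First I would fix a $K_r$-minor free graph $G$ on $n$ vertices attaining $\lambda_1 := \lambda_1(G)$ maximum among such graphs, and let $\bm{x}$ be the Perron eigenvector normalized so that $\max_v x_v = x_z = 1$ for some vertex $z$. Since $K_{r-2}\vee(n-r+2)K_1$ is $K_r$-minor free (contracting any independent set still leaves only $r-1$ pairwise adjacent branch sets), we get the lower bound $\lambda_1 \geq \lambda_1\!\big(K_{r-2}\vee(n-r+2)K_1\big)$; a direct computation of the quotient matrix $\left(\begin{smallmatrix} r-3 & n-r+2\\ r-2 & 0\end{smallmatrix}\right)$ of this join gives that this value equals $\tfrac12\big(r-3+\sqrt{4(r-2)(n-r+2)+(r-3)^2}\big)$, so in particular $\lambda_1 = \Theta(\sqrt n)$.

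Next I would harvest structural consequences of $\lambda_1$ being large. By \autoref{thm:edge-kr-minor-free} we have $e(G) \le Cn$, so the average degree is bounded, hence $2e(G)/n = O(1)$; combined with $\lambda_1^2 \le \max_v \sum_{u\sim v} d(u)$ and $\lambda_1 = \Theta(\sqrt n)$, a small neighborhood-counting argument forces the existence of a vertex $z$ of degree $\Theta(n)$, and in fact (iterating) a small set $W$ of $r-2$ vertices each of linear degree whose common neighborhood $U := \bigcap_{w\in W} N(w)$ has size $n - O(1)$. Here one uses that if $r-1$ vertices had pairwise-large common neighborhoods one could greedily route $K_r$ as a minor, contradicting $K_r$-minor freeness; so $|W| \le r-2$. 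The eigen-equation $\lambda_1 x_v = \sum_{u\sim v} x_u$ applied at the high-$x$ vertices, together with $e(G)=O(n)$, shows $\sum_{v} x_v = O(\sqrt n)$ and that every vertex outside $W$ has $x$-value $o(1)$, while each $w\in W$ has $x_w = 1-o(1)$.

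The heart of the argument is then a local exchange / edge-counting step showing that, after these reductions, $G$ must be \emph{exactly} $K_{r-2}\vee(n-r+2)K_1$. One shows (i) $W$ induces a clique (if some $w_1w_2\notin E$, adding it keeps $G$ $K_r$-minor free because $W\cup\{$two vertices of $U\}$ still cannot form $K_r$ once $|W|=r-2$… more carefully: one checks directly that adding any missing edge inside $W$ cannot create a $K_r$-minor given the global structure, and it strictly increases $\lambda_1$ via Perron–Frobenius, contradicting maximality); (ii) every vertex of $U$ is adjacent to all of $W$ (again by an eigenvector exchange: moving an edge from low-$x$ endpoints to attach a $U$-vertex to an unsaturated $w$ raises $\bm{x}^{\!\top}\! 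A \bm{x}$); and (iii) $U$ is independent and $W$ dominates, i.e.\ there are no other vertices and no edges inside $U$ or from $U$ to outside $W$ — each such edge, if present, would either create a $K_r$-minor (an edge inside $U$ together with the $r-2$ apex vertices $W$ yields a $K_r$ minor) or could be rerouted to increase the spectral radius. Assembling (i)–(iii) gives $G\cong K_{r-2}\vee(n-r+2)K_1$, and then the eigenvalue of this graph, computed from its $2\times2$ equitable quotient, is exactly the claimed bound. Finally, for \emph{any} $K_r$-minor free $G$ (not just the extremal one) the same chain shows $\lambda_1(G)$ is at most that of the extremal graph, with equality only in the stated case.

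The main obstacle I anticipate is step (iii): ruling out a bounded number of ``extra'' edges and ``extra'' low-eigenvector-weight vertices requires a careful simultaneous control of the error terms $o(1)$ in the eigenvector entries and the $O(1)$ slack in $e(G)\le Cn$, so that a single local edge move is guaranteed to produce a \emph{strict} increase in $\bm{x}^{\!\top}\! A\bm{x}/\bm{x}^{\!\top}\!\bm{x}$; making that quantitative (and checking that none of these moves can inadvertently create a $K_r$-minor) is the delicate part. This is precisely the content of Tait's proof of \autoref{thm:spectral-kr-minor-free}, which I would invoke rather than reprove in full detail.
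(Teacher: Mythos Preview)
The paper does not prove this theorem at all: it is quoted verbatim from Tait \cite{T} and used as a black box (see its invocation in the proof of Lemma~\ref{lem:bounds-lambda-1-n}). Your proposal correctly identifies this and sketches Tait's stability-plus-local-exchange argument before concluding that you would ``invoke rather than reprove'' his result, which is exactly what the paper does; so there is nothing to compare, and your handling is appropriate.
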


We conclude this subsection with the following lemma, which can be proved by induction or double counting.

\begin{lemma}[{\cite[Lemma 12]{Cioaba-Feng-Tait-Zhang2020}}]\label{lem:intersection-sets}
Let $A_1,A_2,\ldots,A_k$ be $k$ finite sets. Then 
\[
|A_1\cap A_2\cap\cdots\cap A_k| \geq \sum_{i=1}^k |A_i| - (k-1) \bigg|\bigcup_{i=1}^k A_i\bigg|.
\]
\end{lemma}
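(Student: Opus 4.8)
The plan is to prove this by a direct double-counting argument on the ground set; since the hint mentions both routes, I would also record the short induction on $k$.

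\textbf{Double counting.} Set $U = \bigcup_{i=1}^{k} A_i$ and $N = |U|$. For each $x \in U$ define its multiplicity $c(x) = |\{\, i \in [k] : x \in A_i \,\}| \in \{1,\dots,k\}$. Counting element–set incidences in two ways gives $\sum_{i=1}^{k} |A_i| = \sum_{x \in U} c(x)$. An element $x$ lies in $A_1 \cap \cdots \cap A_k$ precisely when $c(x) = k$, so with $t := |A_1 \cap \cdots \cap A_k|$ we have $t = |\{\, x \in U : c(x) = k \,\}|$. Each of the remaining $N - t$ elements of $U$ has multiplicity at most $k-1$, hence
\[
\sum_{i=1}^{k} |A_i| \;=\; \sum_{x \in U} c(x) \;\le\; k\,t + (k-1)(N - t) \;=\; t + (k-1)N .
\]
Rearranging gives $t \ge \sum_{i=1}^{k} |A_i| - (k-1)N$, which is exactly the claimed inequality.

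\textbf{Induction (alternative).} For $k=1$ the statement is the trivial identity $|A_1| = |A_1|$, and for $k=2$ it is inclusion–exclusion. For the inductive step, put $B = A_1 \cap \cdots \cap A_{k-1}$ and use the two-set identity $|B \cap A_k| = |B| + |A_k| - |B \cup A_k|$. Bounding $|B|$ from below by the inductive hypothesis and exploiting the monotonicity $B \cup A_k \subseteq \bigcup_{i=1}^{k} A_i$ and $\bigcup_{i=1}^{k-1} A_i \subseteq \bigcup_{i=1}^{k} A_i$ raises the coefficient of the union from $-(k-2)-1$ to $-(k-1)$, yielding the bound for $k$.

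There is no genuine obstacle here: the statement is elementary and both proofs are a few lines. The only point needing a little care is directional — in the inductive version, to replace $|\bigcup_{i=1}^{k-1} A_i|$ by the larger $|\bigcup_{i=1}^{k} A_i|$ so the inequality still points the right way; in the double-counting version, to observe that elements outside the common intersection contribute at most $k-1$ apiece. One also reads off the equality case (each element of $U$ lies in all $k$ sets or in exactly $k-1$ of them), though this is not needed for the applications.
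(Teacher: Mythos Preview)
Your proposal is correct and matches the paper's own treatment: the paper does not spell out a proof but merely remarks that the lemma ``can be proved by induction or double counting'', and you have supplied precisely those two arguments in full, each carried out cleanly.
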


\subsection{Preliminary results on $K_r$-minor free graphs}

In light of Theorem \ref{thm:edge-kst-minor-free-bipartite}, we have the following result 
on $K_r$-minor free bipartite graphs.

\begin{lemma}\label{lem:edge-kr-minor-free-bipartite}
Let $G$ be an $n$-vertex $K_r$-minor free bipartite graph, with vertex partition $A$ and $B$.
If $|A| = k$ and $|B| = n - k$, then there exists a constant $C$ depending only on $r$ such that
\[
e(G)\leq (r-2)n + Ck.
\]
\end{lemma}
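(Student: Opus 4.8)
The plan is to reduce this to the bipartite $K_{s,t}$-minor-free bound of Theorem~\ref{thm:edge-kst-minor-free-bipartite}. The key observation is that $K_r$ contains $K_{r-1,1}$ as a subgraph, and more usefully, a $K_r$-minor is produced whenever we have a $K_{r-1,r-1}$-minor (indeed even $K_{r-1,1}$ together with the right structure, but the cleanest route is to note that $K_{r-1, r-1}$ has a $K_r$-minor, since one can contract a matching of size... actually more simply: $K_{s,t}$ with $t \geq s$ has a $K_{s+1}$-minor whenever $t \geq s$, by pairing up $s$ vertices on the large side with the $s$ vertices on the small side and contracting, leaving one extra vertex on the large side). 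Hence a $K_r$-minor free bipartite graph is in particular $K_{r-1,r-1}$-minor free.

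First I would apply Theorem~\ref{thm:edge-kst-minor-free-bipartite} with $s = t = r-1$. Writing the partition so that $|A| = k \leq n-k = |B|$ when $k \leq n/2$, and using the hypothesis that $n$ (hence $n-k$) is large enough that $|B| \gg |A|$, we get
\[
e(G) \leq (r-2)\,|B| + 4^{r}\,(r-1)!\,(r-1)\cdot|A| = (r-2)(n-k) + C_0 k
\]
for $C_0 = 4^{r}(r-1)!(r-1)$. Since $(r-2)(n-k) = (r-2)n - (r-2)k \leq (r-2)n$, and absorbing the $-(r-2)k$ term, this gives $e(G) \leq (r-2)n + (C_0 - (r-2))k \leq (r-2)n + C_0 k$, which is the claimed bound with $C = C_0$ depending only on $r$.

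The only genuine subtlety is the asymmetric hypothesis $|A| \gg |B|$ in Theorem~\ref{thm:edge-kst-minor-free-bipartite}: it is stated for one side much larger than the other, whereas in Lemma~\ref{lem:edge-kr-minor-free-bipartite} we make no such assumption on how $k$ compares to $n-k$. I would handle this by a case split. If $k \leq n - k$ (the large side is $B$), apply the theorem as above, which is valid because $n - k \geq n/2$ is large. If instead $k > n-k$, then $k > n/2$, so $(r-2)k > (r-2)n/2$; one checks directly that $e(G) \leq \binom{n}{2}$ — or better, use Theorem~\ref{thm:edge-kr-minor-free} to get $e(G) \leq Cn$ for some constant $C$ depending on $r$ — and since $k > n/2$ we have $Cn < 2Ck \leq (r-2)n + 2Ck$, again of the required form after enlarging the constant. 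Thus in all cases the bound $e(G) \leq (r-2)n + Ck$ holds for a single constant $C = C(r)$, which is all that is asserted. The main thing to be careful about is simply tracking that the threshold "$n$ sufficiently large" in the ambient theorem and the "$|A|\gg|B|$" condition are both met in the case $k \leq n/2$, and invoking the cruder edge bound otherwise.
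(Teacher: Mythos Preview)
Your approach is essentially the same as the paper's: both observe that $K_{r-1,r-1}$ contains a $K_r$-minor (so a $K_r$-minor free graph is $K_{r-1,r-1}$-minor free) and then invoke Theorem~\ref{thm:edge-kst-minor-free-bipartite} with $s=t=r-1$. You are in fact more careful than the paper, which simply asserts that the conclusion follows from Theorem~\ref{thm:edge-kst-minor-free-bipartite} without discussing the $|A|\gg|B|$ hypothesis or the case $k>n-k$.
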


\begin{proof}
Noting that every $K_{r-1,r-1}$-minor graph contains a $K_r$-minor by contracting some edges, 
we have that $G$ is $K_{r-1,r-1}$-minor free. Therefore, the assertion follows from Theorem
\ref{thm:edge-kst-minor-free-bipartite}, as desired.
\end{proof}

Throughout this paper, we set
\[
\gamma_{n,r} := \sqrt{(r-2)(n-r+2)}.
\]
Using Theorem \ref{thm:spectral-kr-minor-free}, we can now prove the following result.

\begin{lemma}\label{lem:bounds-lambda-1-n}
Let $G$ be an $n$-vertex $K_r$-minor free graph with maximum spread. Then
\[
\gamma_{n,r} - \frac{r-3}{2} - O\Big(\frac{1}{\sqrt{n}}\Big) 
\leq -\lambda_n (G) \leq \lambda_1 (G) \leq\gamma_{n,r} + \frac{r-3}{2} + O\Big(\frac{1}{\sqrt{n}}\Big).
\]
\end{lemma}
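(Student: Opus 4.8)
The plan is to sandwich both $\lambda_1(G)$ and $-\lambda_n(G)$ between $\gamma_{n,r}\pm\frac{r-3}{2}+O(n^{-1/2})$ by combining the spectral upper bound of Theorem~\ref{thm:spectral-kr-minor-free} with a lower bound coming from a concrete $K_r$-minor free graph. First I would observe that $K_{r-2}\vee(n-r+2)K_1$ is $K_r$-minor free (any $K_r$-minor would need $r$ pairwise-adjacent branch sets, but the independent set contributes at most one useful branch set beyond a single vertex) and compute that its spread is exactly $\sqrt{4(r-2)(n-r+2)+(r-3)^2}$; indeed its nonzero eigenvalues are the roots of $\lambda^2-(r-3)\lambda-(r-2)(n-r+2)=0$, so $\lambda_1-\lambda_n=\sqrt{(r-3)^2+4(r-2)(n-r+2)}=\sqrt{4\gamma_{n,r}^2+(r-3)^2}$. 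Since $G$ has maximum spread among $K_r$-minor free graphs, $s(G)\ge\sqrt{4\gamma_{n,r}^2+(r-3)^2}=2\gamma_{n,r}\sqrt{1+(r-3)^2/(4\gamma_{n,r}^2)}=2\gamma_{n,r}+O(1/\sqrt n)$, using $\gamma_{n,r}=\Theta(\sqrt n)$ and $\sqrt{1+x}=1+O(x)$ for small $x$.

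Next I would bound $\lambda_1(G)$ from above directly by Theorem~\ref{thm:spectral-kr-minor-free}: expanding $\frac12\big(r-3+\sqrt{4(r-2)(n-r+2)+(r-3)^2}\big)=\frac{r-3}{2}+\gamma_{n,r}\sqrt{1+(r-3)^2/(4\gamma_{n,r}^2)}=\gamma_{n,r}+\frac{r-3}{2}+O(1/\sqrt n)$ gives the claimed upper bound on $\lambda_1(G)$. Combining this with the spread lower bound yields
\[
-\lambda_n(G)=s(G)-\lambda_1(G)\ge\Big(2\gamma_{n,r}+O(1/\sqrt n)\Big)-\Big(\gamma_{n,r}+\tfrac{r-3}{2}+O(1/\sqrt n)\Big)=\gamma_{n,r}-\tfrac{r-3}{2}-O(1/\sqrt n),
\]
which is the stated lower bound. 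The middle inequality $-\lambda_n(G)\le\lambda_1(G)$ is immediate since for any graph the Perron eigenvalue dominates the modulus of every eigenvalue (or, more elementarily, $\lambda_1\ge 0\ge -|\lambda_n|$ is not quite enough, but $\lambda_1\ge|\lambda_n|$ holds because $\lambda_1$ is the spectral radius of a nonnegative symmetric matrix). Finally the upper bound on $-\lambda_n(G)$ follows from the upper bound on $\lambda_1(G)$ via this middle inequality, and the lower bound on $\lambda_1(G)$ follows from the lower bound on $-\lambda_n(G)$ via the same inequality, closing the chain.

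The only genuine content is the two endpoints: the upper bound $\lambda_1(G)\le\gamma_{n,r}+\frac{r-3}{2}+O(1/\sqrt n)$, which is just Theorem~\ref{thm:spectral-kr-minor-free} after a Taylor expansion of the square root, and the lower bound $s(G)\ge 2\gamma_{n,r}-O(1/\sqrt n)$, which requires exhibiting the explicit extremal construction and computing its spread. I expect no real obstacle here; the one point to be careful about is verifying that $K_{r-2}\vee(n-r+2)K_1$ is actually $K_r$-minor free for all $r\ge 3$ (and nonempty, i.e.\ $n\ge r-2$), and tracking that all the error terms are uniformly $O(1/\sqrt n)$ with the implied constant depending only on $r$, which is fine since $r$ is fixed.
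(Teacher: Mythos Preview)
Your proposal is correct and follows essentially the same route as the paper: upper-bound $\lambda_1$ via Theorem~\ref{thm:spectral-kr-minor-free}, lower-bound $s(G)$ by comparison with an explicit $K_r$-minor free graph, and subtract. The only cosmetic difference is that the paper uses $K_{r-2,n-r+2}$ as the comparison graph (giving $s(G)\ge 2\gamma_{n,r}$ directly, with no Taylor expansion needed for that step), whereas you use $K_{r-2}\vee(n-r+2)K_1$; both choices work equally well here.
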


\begin{proof}  
By Theorem \ref{thm:spectral-kr-minor-free}, we have
\begin{align}\label{lem:bounds-lambda-1-n-eq:1}
\lambda_1 (G)
& \leq\frac{1}{2}\left(r-3 + \sqrt{4\gamma_{n,r}^2+(r-3)^2}\right) \nonumber \\
& = \gamma_{n,r} + \frac{r-3}{2} + O\Big(\frac{1}{\sqrt{n}}\Big).
\end{align}

Next, we establish the lower bound for $-\lambda_n(G)$. Observe that $K_{r-2,n-r+2}$ is $K_r$-minor 
free, which implies $s(G)\geq s(K_{r-2,n-r+2}) = 2\gamma_{n,r}$. Thus, we have
\begin{equation}\label{lem:bounds-lambda-1-n-eq:2}
-\lambda_n (G) \geq 2\gamma_{n,r} - \lambda_1 (G).
\end{equation}
Substituting \eqref{lem:bounds-lambda-1-n-eq:1} into \eqref{lem:bounds-lambda-1-n-eq:2}, we obtain
\[
-\lambda_n (G) \geq \gamma_{n,r} - \frac{r-3}{2} - O\Big(\frac{1}{\sqrt{n}}\Big).
\]
This completes the proof of Lemma \ref{lem:bounds-lambda-1-n}.
\end{proof}

\section{Structure of $K_r$-minor free graphs with maximum spread}
\label{sec3}

In this section,  we always assume that $G$ is a graph attaining the maximum spread among all 
$K_r$-minor free graphs of order $n$. The aim of this section is to explore the general 
structure of $G$. We will show that $G$ contains $r-2$ vertices, each with degree $n-1$, 
and that after removing these $r-2$ vertices, the resulant graph forms an independent set.

We now fix some notation. Let $\bm{x}$ be a nonnegative eigenvector corresponding to 
$\lambda_1(G)$, and let $\bm{z}$ be an eigenvector corresponding to $\lambda_n(G)$. 
For convenience, we normalize both $\bm{x}$ and $\bm{z}$ so that their maximum entries 
in absolute value are $1$. Without loss of generality, assume that there are two vertices 
$u_0$ and $w_0$ such that $x_{u_0}=1$ and $|z_{w_0}|=1$. Set $\lambda_1 := \lambda_1(G)$ 
and $\lambda_n := \lambda_n (G)$ for short. Let 
\[ 
V_+ = \{v: z_v > 0\},~~ V_- = \{v: z_v < 0\},~~ V_0 = \{v: z_v = 0\}.
\]
Observing that $\bm{z}$ is a non-zero vector and the eigen-equations with respect to $\lambda_n$, 
we derive $V_+\neq\emptyset$ and $V_-\neq\emptyset$.
In this section, we always assume that $|V_+|\leq n/2$.
For $S\subseteq V(G)$, we shall use $\vol(S) = \sum_{v\in S} |z_v|$ to denote the volume of $S$. 

Our sketchy strategy will be to firstly show that there are $r-2$ vertices of large degree 
(Lemma \ref{lem:large-degree}) and the remaining vertices have small eigenvector entry (Lemma \ref{lem:size-xi-in-L-and-U-V}).
Then, we use these facts to show that the $r-2$ vertices of large degree must be adjacent 
to all other vertices (see Theorem \ref{thm:structure-kr-minor-free-max-spread}).

Next, we will present a few simple lemmas that, while perhaps not optimal, are sufficient for our purposes. To begin with, we firstly give the following two facts for estimating the volume of a subset $S$ of $V_+$, which will be used in following lemmas frequently. By considering the eigen-equation with repect to each $v \in S$, we have 
\begin{align} \label{Vol-for-set}
|\lambda_n|^2\vol (S)
& = |\lambda_n|^2\sum_{v \in S} z_v 
= |\lambda_n|\sum_{v \in S} \bigg(-\sum_{u\in N(v)}{z_u}\bigg) \nonumber \\
& \leq\sum_{v \in S} \sum_{u\in N(v)\cap V_-} |\lambda_n| |z_u| \nonumber\\
& = \sum_{v \in S} \sum_{u\in N(v)\cap V_-} \sum_{w\in N(u)} z_w \nonumber\\
& \leq\sum_{v \in S} \sum_{u\in N(v)\cap V_-} \sum_{w\in N(u)\cap V_+} z_w \nonumber\\
& = \sum_{w\in V_+} z_w \sum_{u\in N(w)\cap V_-} |N(u)\cap S| \nonumber\\
& = \sum_{w\in V_+} z_w\cdot |E(N(w)\cap V_-, S)|.
\end{align}
In particular, if $S=\{s\} \subseteq V_+$, it follows from \eqref{Vol-for-set} that
\begin{align}\label{Vol-for-vertex}
   |\lambda_n|^2 z_s  \leq \sum_{w\in V_+} z_w\cdot |E(N(w)\cap V_-, S)|=\sum_{w\in V_+} z_w\cdot |N(w)\cap V_- \cap N(s)|.
\end{align}

\begin{lemma}\label{lem:vol-v+}
$\vol (V_+) = O(1)$.
\end{lemma}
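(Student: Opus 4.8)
The plan is to exploit inequality \eqref{Vol-for-set} with $S = V_+$, combined with the eigenvalue bounds from Lemma \ref{lem:bounds-lambda-1-n} and the edge bound for $K_r$-minor free bipartite graphs from Lemma \ref{lem:edge-kr-minor-free-bipartite}. First I would apply \eqref{Vol-for-set} to $S = V_+$ and observe that for each $w \in V_+$, the quantity $|E(N(w) \cap V_-, V_+)|$ counts edges in the bipartite graph between $V_-$ and $V_+$. Bounding $z_w \leq 1$ and summing, the right-hand side is at most $e(V_-, V_+)$, the number of edges of $G$ between $V_+$ and $V_-$. But $G[V_+ \cup V_-]$ is $K_r$-minor free (as a subgraph of $G$) and the edges between $V_+$ and $V_-$ form a bipartite graph, so by Lemma \ref{lem:edge-kr-minor-free-bipartite} this is at most $(r-2)n + C\min(|V_+|,|V_-|) \leq (r-2)n + C|V_+| \leq (r-2)n + Cn/2$, i.e. $O(n)$, using our standing assumption $|V_+| \leq n/2$. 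On the left-hand side, $|\lambda_n|^2 = \Theta(n)$ by Lemma \ref{lem:bounds-lambda-1-n} since $\gamma_{n,r} = \Theta(\sqrt n)$. Dividing, we would get $\vol(V_+) = O(n)/\Theta(n) = O(1)$.

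The one subtlety is that the crude bound $z_w \le 1$ applied to \emph{all} of $V_+$ gives the right-hand side $\le e(V_+, V_-)$ only after checking that the double sum $\sum_{w \in V_+} z_w \cdot |E(N(w)\cap V_-, V_+)|$ really is bounded by the number of such edges; each edge $\{u,v\}$ with $u \in V_-$, $v \in V_+$ is counted, for each $w \in V_+$ adjacent to $u$, once in the term $|N(w)\cap V_- \cap \text{(something)}|$ — so in fact the sum telescopes into $\sum_{u \in V_-} z_{?}\cdots$; I would rewrite \eqref{Vol-for-set} carefully in the form $\sum_{w \in V_+} z_w |E(N(w)\cap V_-, V_+)| \le \sum_{\{u,v\}:\, u\in V_-,\, v\in V_+,\, uv\in E} \big(\sum_{w \in N(u)\cap V_+} z_w\big)$ or more simply bound each inner edge-count $|E(N(w)\cap V_-, V_+)|$ by $e(V_+,V_-)$ is too lossy; the clean route is: the right side equals $\sum_{u \in V_-}\big(\sum_{w \in N(u)\cap V_+} z_w\big)\cdot|N(u)\cap V_+| \le \sum_{u\in V_-} |N(u)\cap V_+|^2$, which is not obviously $O(n)$. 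So instead I would keep $S=V_+$ but bound $|E(N(w)\cap V_-, V_+)| \le e(V_+, V_-)$ only after noting $z_w \le 1$ and that there are at most... — actually the cleanest is to bound $\sum_{w\in V_+} z_w|E(N(w)\cap V_-,V_+)| \le \sum_{w\in V_+}|E(N(w)\cap V_-,V_+)|$; since each edge of $E(V_+,V_-)$ lies in $E(N(w)\cap V_-,V_+)$ for at most $|V_+|$ choices of $w$, this is $\le |V_+|\cdot e(V_+,V_-) = O(n^2)$, which is too weak. The right fix, and the step I expect to require the most care, is to use the eigen-equation one more time: bound $\sum_{w \in N(u)\cap V_+} z_w \le |\lambda_n||z_u|$ by the eigen-equation at $u$ (all neighbors of $u$ in $V_+$ contribute nonnegatively and $z_u<0$), so the right-hand side of \eqref{Vol-for-set} is $\le \sum_{u\in V_-}|\lambda_n||z_u|\cdot|N(u)\cap V_+|\cdot\frac{1}{?}$ — no. Let me restate: from \eqref{Vol-for-set}, RHS $= \sum_{w\in V_+} z_w|E(N(w)\cap V_-,V_+)| = \sum_{u\in V_-}|N(u)\cap V_+|\sum_{w\in N(u)\cap V_+}z_w \le \sum_{u\in V_-}|N(u)\cap V_+|\cdot|\lambda_n||z_u|$, using the eigen-equation at $u$: $-|\lambda_n||z_u| = \lambda_n z_u = \sum_{w\in N(u)}z_w \ge \sum_{w\in N(u)\cap V_+}z_w - \vol(V_-)$... this needs $\vol(V_-)$ control, which we don't yet have. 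Given these difficulties, I would instead simply bound RHS $\le |\lambda_n|\cdot(\max\text{-degree into }V_+)\cdot\vol(V_-)$ crudely and run a bootstrap — but cleaner still: bound $|E(N(w)\cap V_-,V_+)|\le e(G[V_+\cup V_-])$, which by the subgraph's $K_r$-minor-freeness and Theorem \ref{thm:edge-kr-minor-free} is $O(n)$; then RHS $\le O(n)\sum_{w\in V_+}z_w = O(n)\vol(V_+)$, giving $|\lambda_n|^2\vol(V_+)\le O(n)\vol(V_+)$, which is vacuous.

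Given the above, the genuinely workable approach — and the one I would commit to — is to apply \eqref{Vol-for-set} with a \emph{small} set $S$ and bootstrap, or to split $V_+$ by eigenvector weight: let $S$ be the vertices of $V_+$ and bound the right side by $\sum_{w \in V_+} z_w |E(N(w)\cap V_-, V_+)|$ where now I bound $|E(N(w)\cap V_-,V_+)| \le \sum_{u \in N(w)\cap V_-} d(u)$, and $\sum_u d(u) = 2e(G) = O(n)$ over \emph{all} $u$, so interchanging, RHS $\le \sum_{u \in V_-} d(u)\cdot|N(u)\cap V_+|\cdot(\text{avg }z) $ — still quadratic. The clean resolution, which I now believe is the intended one: bound each $|E(N(w)\cap V_-, V_+)|$ by $e(N(w)\cap V_-, V(G))\le \sum_{u\in N(w)}d(u)$, but more importantly use that the neighbors of $w$ in $V_-$ together span a $K_r$-minor-free graph with the rest, so the edges they send to $V_+$ number at most $(r-2)(|N(w)\cap V_-| + |V_+|) + C|V_+|$ by Lemma \ref{lem:edge-kr-minor-free-bipartite} — this is $O(n)$ uniformly in $w$, giving again RHS $= O(n)\vol(V_+)$ and a vacuous bound. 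I therefore expect the \textbf{main obstacle} to be precisely this circularity, and the real proof must bound RHS by $O(n)$ \emph{independently} of $\vol(V_+)$; the way to do that is: RHS $= \sum_{u\in V_-} |N(u)\cap V_+| \cdot \big(\sum_{w\in N(u)\cap V_+} z_w\big)$, and bound $\sum_{w\in N(u)\cap V_+}z_w \le \lambda_1 x$-type bound is wrong vector — instead bound it trivially by $|N(u)\cap V_+| \le n$, giving RHS $\le n\cdot e(V_+,V_-) = n\cdot O(n) = O(n^2)$ and hence $\vol(V_+) \le O(n^2)/\Theta(n) = O(n)$, then \emph{iterate}: plug $\vol(V_-)\le \vol(V(G)) = O(n)$ and $\sum_{w\in N(u)\cap V_+}z_w \le |\lambda_n||z_u| + \vol(V_-) = O(\sqrt n) + O(n)$... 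I would ultimately carry out a careful two- or three-step bootstrap, starting from the trivial $\vol(V_+) \le |V_+| \le n/2$, feeding it through \eqref{Vol-for-set} with the bipartite edge bound Lemma \ref{lem:edge-kr-minor-free-bipartite} applied to $G[V_+\cup V_-]$ to improve the exponent each round, until it stabilizes at $O(1)$. The bipartite structure of $E(V_+,V_-)$ together with $|V_+|\le n/2$ is exactly what makes Lemma \ref{lem:edge-kr-minor-free-bipartite} applicable and drives the bootstrap down to a constant.
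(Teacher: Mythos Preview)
You have correctly located the central difficulty: applying \eqref{Vol-for-set} with $S=V_+$ and bounding each $|E(N(w)\cap V_-,V_+)|$ uniformly by $O(n)$ produces an inequality of the form $|\lambda_n|^2\vol(V_+)\le c\,n\,\vol(V_+)$, which is vacuous because the best constant $c$ one can extract from Lemma~\ref{lem:edge-kr-minor-free-bipartite} is not strictly below $r-2$. Your proposed bootstrap does not escape this, since each iteration reproduces exactly the same inequality; nothing in the bound $|E(N(w)\cap V_-,V_+)|\le(r-2)(|N(w)\cap V_-|+|V_+|)+C\min(\cdots)$ depends on $\vol(V_+)$, so feeding in a previous estimate on $\vol(V_+)$ gives no improvement and the ``exponent'' never moves.

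The missing idea, which the paper supplies, is to split $V_+$ by \emph{degree into $V_-$} rather than by eigenvector weight. Set $V_+^{(1)}=\{v\in V_+:|N(v)\cap V_-|\ge\varepsilon n\}$ and $V_+^{(2)}=V_+\setminus V_+^{(1)}$. Since $e(V_+,V_-)=O(n)$, there are only $O(1)$ vertices in $V_+^{(1)}$, so $\vol(V_+^{(1)})=O(1)$. Now apply \eqref{Vol-for-set} with $S=V_+^{(2)}$ and split the sum over $w\in V_+^{(1)}$ and $w\in V_+^{(2)}$. For $w\in V_+^{(1)}$ one uses the crude bound $|E(N(w)\cap V_-,V_+^{(2)})|\le O(n)$; this term contributes $\vol(V_+^{(1)})\cdot O(n)=O(n)$, \emph{independent of} $\vol(V_+^{(2)})$. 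For $w\in V_+^{(2)}$ one has $|N(w)\cap V_-|<\varepsilon n$, so Lemma~\ref{lem:edge-kr-minor-free-bipartite} gives $|E(N(w)\cap V_-,V_+^{(2)})|\le(r-2)|V_+^{(2)}|+O(\varepsilon n)\le\frac{r-2}{2}n+O(\varepsilon n)$, using $|V_+|\le n/2$. This coefficient is strictly below $|\lambda_n|^2\approx(r-2)n$, so the inequality becomes $\big(|\lambda_n|^2-\tfrac{r-2}{2}n-O(\varepsilon n)\big)\vol(V_+^{(2)})\le O(n)$, whence $\vol(V_+^{(2)})=O(1)$. The degree split is precisely what creates the gap between $|\lambda_n|^2$ and the coefficient of $\vol(V_+^{(2)})$, breaking the circularity you ran into.
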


\begin{proof}
Fix a sufficiently small constant $\varepsilon > 0$, let
\begin{equation*}
V_+^{(1)} = \{v\in V_+: |N(v)\cap V_-| \geq\varepsilon n \}, \quad 
V_+^{(2)} = V_+ \setminus V_+^{(1)}.
\end{equation*}
By the definition of $V_+^{(1)}$ and Theorem \ref{thm:edge-kr-minor-free}, we have
\begin{equation}\label{lem:vol-v+-eq:1}
|V_+^{(1)}|\leq\frac{E(V_+^{(1)}, V_-)}{\varepsilon n} = O (1).
\end{equation}

To finish the proof, we shall derive an relationship between $\vol (V_+^{(1)})$ 
and $\vol (V_+^{(2)})$. It follows from \eqref{Vol-for-set} that 
\begin{equation}\label{lem:vol-v+-eq:2}
|\lambda_n|^2 \vol (V_+^{(2)}) \leq 
\sum_{w\in V_+^{(1)}} z_w \cdot |E(N(w)\cap V_-, V_+^{(2)})| + \sum_{w\in V_+^{(2)}} z_w\cdot |E(N(w)\cap V_-, V_+^{(2)})|.
\end{equation}
For each $w\in V_+^{(1)}$, by Theorem \ref{thm:edge-kr-minor-free}, we have
\begin{align}\label{lem:vol-v+-eq:3}
|E(N(w)\cap V_-, V_+^{(2)})|\leq|E(V_-, V_+^{(2)})| = O(n).
\end{align}
For each $w\in V_+^{(2)}$, it follows from Lemma \ref{lem:edge-kr-minor-free-bipartite} that
\begin{align}\label{lem:vol-v+-eq:4}
|E(N(w)\cap V_-, V_+^{(2)})| \leq (r-2) |V_+^{(2)}| + \varepsilon \cdot O(n)
\leq \frac{r-2}{2} n + \varepsilon \cdot O(n).
\end{align}
Hence, by \eqref{lem:vol-v+-eq:2}, \eqref{lem:vol-v+-eq:3} and \eqref{lem:vol-v+-eq:4}, we derive that
\[
|\lambda_n|^2\vol (V_+^{(2)}) \leq \vol(V_+^{(1)}) \cdot O(n) + 
\vol(V_+^{(2)}) \left(\frac{r-2}{2} n + \varepsilon \cdot O(n)\right),
\]
which, together with Lemma \ref{lem:bounds-lambda-1-n}, implies that
\[
\vol(V_+^{(2)}) \leq \frac{O(n)}{|\lambda_n|^2 - (r-2)n/2 - \varepsilon \cdot O(n)} 
\cdot \vol(V_+^{(1)}) = O(1) \cdot \vol(V_+^{(1)}).
\]
Noting that $\vol(V_+) = \vol(V_+^{(1)}) + \vol(V_+^{(2)})$ and \eqref{lem:vol-v+-eq:1}, we derive
\[
\vol(V_+) \leq O(1)\cdot \vol(V_+^{(1)}) \leq O(1) \cdot |V_+^{(1)}| = O(1).
\]
This completes the proof of Lemma \ref{lem:vol-v+}.
\end{proof}

\begin{corollary}\label{cor:size-zi-and-neighbor-w0}
The following conclusions hold:
\begin{enumerate}
\item[\textup{(a)}] For each $v\in V_-$, $|z_v| = O (\frac{1}{\sqrt{n}})$.

\item[\textup{(b)}] $w_0\in V_+$.

\item[\textup{(c)}] $|N(w_0)\cap V_-|=\Omega(n)$.
\end{enumerate}
\end{corollary}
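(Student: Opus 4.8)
The plan is to prove the three parts of Corollary~\ref{cor:size-zi-and-neighbor-w0} in order, using Lemma~\ref{lem:vol-v+} (the bound $\vol(V_+) = O(1)$) together with the lower bound $|\lambda_n| = \Omega(\sqrt{n})$ from Lemma~\ref{lem:bounds-lambda-1-n} and the inequalities \eqref{Vol-for-set}--\eqref{Vol-for-vertex} established just before the statement. For part~(a), I would apply the eigen-equation at a vertex $v \in V_-$: we have $|\lambda_n|\,|z_v| = \big|\sum_{u \in N(v)} z_u\big|$, and since the positive contributions come only from $N(v) \cap V_+$ and $|z_u| \le 1$ throughout, this is at most $\vol(V_+) + \sum_{u \in N(v)\cap V_-}|z_u|$. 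Actually the clean route is: $|\lambda_n|\,|z_v| \le \sum_{u\in N(v)\cap V_+} z_u + \sum_{u\in N(v)\cap V_-}|z_u|$; but to avoid circularity I would instead bound $|\lambda_n|^2 |z_v|$ by pushing the eigen-equation one more step, as in \eqref{Vol-for-set} with the roles of $V_+$ and $V_-$ interchanged, to get $|\lambda_n|^2|z_v| \le \sum_{w\in V_+} z_w \cdot |N(w)\cap N(v)| \le \sum_{w\in V_+} z_w \cdot d(w)$. Since $\sum_{w\in V_+} z_w d(w) \le (\max_w d(w))\cdot\vol(V_+)$ is not obviously $O(n)$, the better bound is $\sum_{w\in V_+} z_w \, d(w) = O(n)$ because $\sum_w z_w\, d(w) \le \lambda_1 \vol(V_+)/\,$... — more simply, $\sum_{w\in V_+} z_w\,|N(w)\cap N(v)| \le \sum_{w \in V_+}\sum_{u\in N(w)\cap V_-}1 \cdot z_w\le |E(V_+,V_-)| \cdot \max_{w} z_w\le |E(V_+,V_-)|=O(n)$ using only $z_w\le 1$ and Theorem~\ref{thm:edge-kr-minor-free}. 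Hence $|\lambda_n|^2 |z_v| = O(n)$, and since $|\lambda_n|^2 = \Omega(n)$, we get $|z_v| = O(1)$; repeating once more with this improved bound (now $\sum_{u\in N(v)\cap V_-}|z_u| = O(|N(v)\cap V_-|)\cdot O(1/\sqrt n)$ after the first pass is not needed) — the point is that feeding $|z_v|=O(1)$ back through $|\lambda_n|\,|z_v|\le \vol(V_+) + \vol(N(v)\cap V_-)$ and iterating gives $|z_v| = O(1/\sqrt n)$; I would present the two-step argument cleanly.

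For part~(b), I would argue by contradiction. Recall $|z_{w_0}| = 1$. If $w_0 \in V_-$, then part~(a) gives $1 = |z_{w_0}| = O(1/\sqrt{n})$, which is absurd for $n$ large. If $w_0 \in V_0$, then $z_{w_0} = 0 \ne 1$ in absolute value, again a contradiction. Hence $w_0 \in V_+$. (One subtlety: the normalization only guarantees $|z_{w_0}| = 1$ with $w_0$ achieving the maximum absolute value, so ruling out $V_-$ and $V_0$ does force $w_0 \in V_+$.)

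For part~(c), I would again use the eigen-equation at $w_0$, now knowing $z_{w_0} = 1$: $|\lambda_n| = |\lambda_n|\,z_{w_0} = \big|\sum_{u\in N(w_0)} z_u\big| \le \sum_{u\in N(w_0)\cap V_+} z_u + \sum_{u\in N(w_0)\cap V_-}|z_u| \le \vol(V_+) + |N(w_0)\cap V_-|\cdot O(1/\sqrt n)$, where the last step uses part~(a). Since $\vol(V_+) = O(1)$ by Lemma~\ref{lem:vol-v+} and $|\lambda_n| = \Omega(\sqrt n)$ by Lemma~\ref{lem:bounds-lambda-1-n}, we obtain $\Omega(\sqrt n) \le O(1) + |N(w_0)\cap V_-|\cdot O(1/\sqrt n)$, which forces $|N(w_0)\cap V_-| = \Omega(n)$, as claimed.

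The main obstacle is part~(a): one must be careful not to argue circularly, since the naive bound $|\lambda_n|\,|z_v| \le \vol(V_+) + \vol(N(v)\cap V_-)$ already involves volumes over $V_-$ that we are trying to control. The resolution is the bootstrapping structure — first establish the crude bound $|z_v| = O(1)$ for all $v \in V_-$ using only $|z_u|\le 1$ and the linear edge bound $e(G) = O(n)$ from Theorem~\ref{thm:edge-kr-minor-free} (so $\vol(N(v)\cap V_-) \le d(v) = O(n)$ is too weak, hence one really does need to push two steps through the eigen-equation to land an $|\lambda_n|^2$ on the left and an $|E(V_+,V_-)| = O(n)$ on the right), and then feed this back to get the sharp $O(1/\sqrt n)$. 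Parts (b) and (c) are then short formal consequences, with (c) requiring nothing beyond the single eigen-equation at $w_0$ combined with (a) and Lemma~\ref{lem:vol-v+}.
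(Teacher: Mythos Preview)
Your arguments for parts (b) and (c) are fine and essentially match the paper's (for (c) the paper uses the two-step inequality \eqref{Vol-for-vertex} instead of your one-step eigen-equation, but both work).

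Part (a), however, has a genuine gap: the bootstrapping you sketch does not actually improve the bound. Starting from the trivial $|z_v|\le 1$ (which is just the normalization) and feeding it into either $|\lambda_n|\,|z_v|\le \vol(V_+)+\vol(N(v)\cap V_-)$ or the two-step version $|\lambda_n|^2|z_v|\le \sum_{w\in V_-}|z_w|\cdot|N(w)\cap V_+\cap N(v)|\le |E(V_+,V_-)|=O(n)$ only returns $|z_v|=O(1)$ again; there is no contraction, because $|N(v)\cap V_-|$ (and hence $\vol(N(v)\cap V_-)$) can be of order $n$. The iteration stalls at $O(1)$ and never reaches $O(n^{-1/2})$.

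The fix is a sign observation you are overlooking. For $v\in V_-$ both $z_v$ and $\lambda_n$ are negative, so
\[
|\lambda_n|\,|z_v| \;=\; \lambda_n z_v \;=\; \sum_{u\in N(v)} z_u
\]
is an \emph{equality} with a signed sum on the right --- no absolute value is needed. The terms coming from $N(v)\cap V_-$ are negative and may simply be dropped to obtain an upper bound:
\[
|\lambda_n|\,|z_v| \;=\; \sum_{u\in N(v)} z_u \;\le\; \sum_{u\in N(v)\cap V_+} z_u \;\le\; \vol(V_+)\;=\;O(1),
\]
whence $|z_v|\le \vol(V_+)/|\lambda_n|=O(n^{-1/2})$ in one step. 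By taking $|\sum_u z_u|$ and applying the triangle inequality you introduced the spurious $V_-$ term that created the apparent circularity; once you keep track of signs, the ``clean route'' you discarded is in fact the complete proof.
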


\begin{proof}
(a) For any $v\in V_-$, by the eigen-equation with respect to $v$, we have
\[
|\lambda_n||z_v| = \sum_{u\in N(v)} z_u
\leq\sum_{u\in N(v)\cap V_+} z_u\leq\vol(V_+).
\]
Combining this with Lemma \ref{lem:bounds-lambda-1-n} and Lemma \ref{lem:vol-v+}, we obtain
\[
|z_v|\leq\frac{\vol(V_+)}{|\lambda_n|} = O\Big(\frac{1}{\sqrt{n}}\Big).
\]

(b) Since $|z_{w_0}|=1$ and $|z_v| = O (\frac{1}{\sqrt{n}})$ for each $v\in V_-$, it follows 
that $w_0\in V_+$ and $z_{w_0}=1$.

(c) By \eqref{Vol-for-vertex}, we have
\begin{align*}
|\lambda_n|^2 =|\lambda_n|^2 z_{w_0}
& \leq \sum_{v\in V_+} z_v \cdot|N(v)\cap V_-\cap N(w_0)|  \\
& \leq |N(w_0)\cap V_-|\cdot\vol(V_+).
\end{align*}
This implies $|N(w_0)\cap V_-|\geq\frac{|\lambda_n|^2}{\vol(V_+)}$.
Then, by Lemma \ref{lem:bounds-lambda-1-n} and Lemma \ref{lem:vol-v+}, we have
\[
|N(w_0)\cap V_-| = \Omega(n).
\]
This completes the proof of Corollary \ref{cor:size-zi-and-neighbor-w0}.
\end{proof}

\begin{lemma}\label{lem:size-V+-and-V-minus}
The following conclusions hold:
\begin{enumerate}
\item[\textup{(a)}] $|V_-|\geq n - O(\sqrt{n})$.

\item[\textup{(b)}] $|V_+| = O(\sqrt{n})$.
\end{enumerate}
\end{lemma}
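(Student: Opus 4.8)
The plan is to bound the volume of $V_+$ from below in terms of $|V_+|$, and then compare with the $O(1)$ upper bound from Lemma \ref{lem:vol-v+}. For part (a), the cleanest route is via the number of edges leaving $V_-$. We know from Corollary \ref{cor:size-zi-and-neighbor-w0}(a) that every vertex of $V_-$ has eigenvector entry $O(1/\sqrt{n})$, so in the eigen-equation $|\lambda_n||z_v| = \sum_{u \in N(v)} z_u \le \sum_{u \in N(v)\cap V_+} z_u$ we gain nothing directly; instead I would turn the estimate around. Summing the eigen-equation over all $v \in V_-$ gives $|\lambda_n|\vol(V_-) \le \sum_{v\in V_-}\sum_{u\in N(v)\cap V_+} z_u = \sum_{u\in V_+} z_u\,|N(u)\cap V_-| \le |V_-|\cdot\vol(V_+)$. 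Meanwhile, considering the top eigenvector $\bm x$ would be awkward; better is to observe that $\vol(V_-) = -\lambda_n \cdot(\text{something})$... Actually the efficient argument is: since the $z_v$ for $v\in V_-$ are all $O(1/\sqrt n)$ in absolute value, we have $\vol(V_-) = O(|V_-|/\sqrt{n}) = O(\sqrt n)$ only if $|V_-| = O(n)$, which is trivial; so I need a lower bound on $\vol(V_-)$. That comes from $\lambda_n$: taking the Rayleigh-type identity $|\lambda_n|\,\vol(V_+) \ge$ ... hmm. The intended argument is almost surely: for the vertex $w_0 \in V_+$ with $z_{w_0}=1$, the eigen-equation gives $|\lambda_n| = |\lambda_n||z_{w_0}| \le \sum_{u\in N(w_0)\cap V_-}|z_u| \le |N(w_0)\cap V_-|\cdot O(1/\sqrt n)$, so $|N(w_0)\cap V_-| = \Omega(\sqrt n)\cdot|\lambda_n| = \Omega(n)$ — which we already have in Corollary \ref{cor:size-zi-and-neighbor-w0}(c), not quite (a).

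For part (b), here is the argument I expect to work. We want $|V_+| = O(\sqrt n)$. Apply inequality \eqref{Vol-for-set} with $S = V_+$:
\[
|\lambda_n|^2 \vol(V_+) \le \sum_{w\in V_+} z_w \cdot |E(N(w)\cap V_-, V_+)| \le \vol(V_+)\cdot \max_{w\in V_+}|E(V_-,V_+)|.
\]
That only gives $|\lambda_n|^2 \le |E(V_-,V_+)| = O(n)$, consistent but not enough. Instead I would sum the \emph{squared} eigen-equation over $V_+$ in a more refined way, or — more promisingly — bound $e(G)$ from below. Since $G$ contains $K_{r-2,n-r+2}$ as... no, we haven't shown that yet. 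The right tool is Lemma \ref{lem:edge-kr-minor-free-bipartite} applied to the bipartite graph between $V_+$ and $V_-$: that graph is $K_r$-minor free, so $e(V_+,V_-) \le (r-2)n + C|V_+|$. On the other hand, $\lambda_1(G) \le \gamma_{n,r} + O(1)$ forces, via $\lambda_1^2 \le e(G)\cdot(\text{const})$-type bounds or via the eigen-equation at $u_0$, that $G$ has $\Omega(n)$ edges concentrated appropriately. The cleanest finish: use \eqref{Vol-for-vertex} summed against $\bm z$ restricted to $V_+$, combined with the fact that the bipartite graph $(V_+, V_-)$ has average $V_-$-degree into $V_+$ bounded, to get $|\lambda_n|^2 \vol(V_+) \le \big((r-2) + o(1)\big)\cdot\frac{n}{2}\cdot\vol(V_+) + O(n)\cdot\vol(V_+^{(1)})$ exactly as in the proof of Lemma \ref{lem:vol-v+}, but now tracking $|V_+|$ rather than its volume — the key is that every $w\in V_+$ contributes $|N(w)\cap V_-| \le n$ and that $\sum_{w\in V_+}|N(w)\cap V_-| = e(V_+,V_-) \le (r-2)n + C|V_+|$, so if $|V_+|$ were $\omega(\sqrt n)$ then the average $V_+$-degree $\frac{e(V_+,V_-)}{|V_+|}$ would be $o(\sqrt n)$, making each $z_v$ for $v\in V_+$ satisfy $|\lambda_n| z_v \le \sum_{u\in N(v)\cap V_-}|z_u| = o(\sqrt n)\cdot O(1/\sqrt n) = o(1)$, contradicting $z_{w_0}=1$ unless $w_0$ is exceptional — and $w_0$ is pinned down by Corollary \ref{cor:size-zi-and-neighbor-w0}(c).

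So the clean proof of (b): suppose for contradiction $|V_+| \ge K\sqrt n$ for every constant $K$ (i.e. $|V_+|/\sqrt n \to \infty$). Let $V_+^{\mathrm{hi}} = \{w\in V_+ : |N(w)\cap V_-| \ge \delta n\}$ for small $\delta$; by $e(V_+,V_-) = O(n)$ we get $|V_+^{\mathrm{hi}}| = O(1)$. For $w \in V_+\setminus V_+^{\mathrm{hi}}$, the eigen-equation gives $|\lambda_n| z_w \le \sum_{u\in N(w)\cap V_-}|z_u| \le |N(w)\cap V_-|\cdot O(1/\sqrt n) < \delta n \cdot O(1/\sqrt n) = O(\delta\sqrt n)$, so $z_w = O(\delta\sqrt n / |\lambda_n|) = O(\delta)$. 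Thus $\vol(V_+) \le |V_+^{\mathrm{hi}}|\cdot 1 + O(\delta)\cdot|V_+|$. But $|V_+^{\mathrm{hi}}| = O(1)$, so for this to be compatible with $\vol(V_+) = O(1)$ we need $O(\delta)|V_+| = O(1)$ — which is fine, no contradiction yet. I need the reverse: a \emph{lower} bound on $\vol(V_+)$ of order $|V_+|/\sqrt n$. That follows from \eqref{Vol-for-vertex}: for each $w\in V_+$ with $z_w$ not too small we'd get a lower bound, but generically it comes from summing \eqref{Vol-for-set} with $S=V_+$ differently. Honestly, the slickest path: $\vol(V_+) \ge \frac{1}{|\lambda_n|}\,e\big(N[w_0]\cap V_-, \cdot\big)$-type bound is circular.

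I'll therefore structure (b) as: first establish $\vol(V_+) = \Omega(|V_+|/\sqrt n)$ using the lower bound $|\lambda_n| |z_v| \ge $ contributions and an averaging over $V_-$ that shows \emph{most} vertices of $V_+$ have volume $\Omega(1/\sqrt n)$ — concretely, $\sum_{v\in V_-} |\lambda_n| |z_v| = \sum_{v\in V_-}\big|\sum_{u\in N(v)} z_u\big|$ and, using that $\vol(V_+)$ and the structure force $e(V_+,V_-)=\Theta(n)$ with the $V_+$-side of volume $O(1)$; the bipartite edge bound $e(V_+,V_-)\le(r-2)n+C|V_+|$ combined with $e(V_+,V_-)=\Omega(n)$ (needed separately, from $\lambda_n^2 = \Omega(n)$ and \eqref{Vol-for-set}) pins the count. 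The \textbf{main obstacle}, which I expect to be the crux, is getting a matching lower bound on $\vol(V_+)$ in terms of $|V_+|$: the upper bound $\vol(V_+)=O(1)$ is in hand, but one must rule out $V_+$ being a large set of uniformly tiny-entry vertices, and this requires exploiting that such vertices would collectively carry too much or too little "mass" through the eigen-equation for $\lambda_n$ relative to the $K_r$-minor-free edge bound. Once (b) is proven, part (a) is immediate: $|V_-| = n - |V_+| - |V_0| \ge n - |V_+| - |V_0|$, and one shows $|V_0| = O(\sqrt n)$ by the same tininess argument (vertices in $V_0$ with many $V_-$-neighbors are $O(1)$ in number, the rest have bounded $V_-$-degree hence — wait, $V_0$ vertices have $z=0$ automatically), so in fact $|V_0|$ needs the bound $|V_+| + |V_0| = O(\sqrt n)$, which follows because $V_0 \cup V_+$ all have $|z_v| = O(1/\sqrt n)$ except $O(1)$ of them and $\vol(V_+\cup V_0)=\vol(V_+)=O(1)$ forces, together with each of the $\Omega(n)$-degree-into-$V_-$ vertices being $O(1)$ in number and every other vertex in $V_+$ having $z_v$ bounded below — ultimately $|V_-| \ge n - O(\sqrt n)$ follows from $|V(G)\setminus V_-| = O(\sqrt n)$, which is exactly (b) together with the analogous count for $V_0$.
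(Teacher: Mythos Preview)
Your proposal is not a proof: it is a sequence of attempts, each of which you yourself flag as insufficient or circular. The central strategy you keep returning to---establish $\vol(V_+) = \Omega(|V_+|/\sqrt{n})$ and combine with $\vol(V_+)=O(1)$---has a genuine gap: vertices in $V_+$ can have arbitrarily small positive $z$-entries, so volume does not control cardinality. There is no a~priori lower bound on $z_v$ for $v\in V_+$, and nothing in the preceding lemmas supplies one. You correctly identify this as ``the main obstacle'' but never overcome it. You also try to prove (b) first and derive (a), which is the wrong order; and your threshold $\delta n$ (copied from Lemma~\ref{lem:vol-v+}) is too coarse to yield $O(\sqrt{n})$ conclusions.

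The paper proceeds in the opposite direction: it proves (a) first, and (b) is then immediate from $|V_+|\le n-|V_-|$. The key idea you are missing is to split $V_+$ at the threshold $C_1\sqrt{n}$ (not $\delta n$), setting $V_+^{(3)}=\{v\in V_+: |N(v)\cap V_-|\ge C_1\sqrt{n}\}$ and $V_+^{(4)}=V_+\setminus V_+^{(3)}$, so that $|V_+^{(3)}|=O(\sqrt{n})$ by the edge bound. One then applies \eqref{Vol-for-set} with $S=V_+^{(3)}$ (not $S=V_+$): for $w\in V_+^{(3)}$ the bipartite edge bound Lemma~\ref{lem:edge-kr-minor-free-bipartite} gives $|E(N(w)\cap V_-,V_+^{(3)})|\le (r-2)|V_-|+O(1)\cdot|V_+^{(3)}|$, while for $w\in V_+^{(4)}$ the same lemma with the small side $N(w)\cap V_-$ gives $|E(N(w)\cap V_-,V_+^{(3)})|\le (r-2)|V_+^{(3)}|+O(\sqrt{n})$. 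Substituting and using $\vol(V_+^{(3)})\ge z_{w_0}=1$ (since $w_0\in V_+^{(3)}$ by Corollary~\ref{cor:size-zi-and-neighbor-w0}(c)) together with $\vol(V_+^{(4)})\le\vol(V_+)=O(1)$, one \emph{rearranges for $|V_-|$} to obtain
\[
|V_-|\;\ge\;\frac{|\lambda_n|^2}{r-2}-O(\sqrt{n})\;\ge\;n-O(\sqrt{n}).
\]
The point is that the inequality from \eqref{Vol-for-set} is used not to bound a volume from above, but---after dividing by $\vol(V_+^{(3)})$---to bound $|V_-|$ from below. This sidesteps entirely the need to relate $\vol(V_+)$ to $|V_+|$.
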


\begin{proof}
Choose a fixed constant $C_1$, let
\begin{align}\label{lem:size-V+-and-V-minus-eq:1}
V_+^{(3)} = \{v\in V_+: |N(v)\cap V_-|\geq C_1\sqrt{n}\}, \quad 
V_+^{(4)} = V_+\setminus V_+^{(3)}.
\end{align}
By the definition of $V_+$ and Theorem \ref{thm:edge-kr-minor-free}, we obtain
\begin{equation}\label{lem:size-V+-and-V-minus-eq:2}
|V_+^{(3)}| \leq \frac{E(V_+^{(3)}, V_-)}{C_1\sqrt{n}} \leq O(\sqrt{n}).
\end{equation}
By \eqref{Vol-for-set}, we have
\begin{align}\label{lem:size-V+-and-V-minus-eq:3}
|\lambda_n|^2 \vol(V_+^{(3)})
 \leq  \sum_{w\in V_+^{(3)}} z_w\cdot|E(N(w)\cap V_-, V_+^{(3)})| 
+ \sum_{w\in V_+^{(4)}} z_w\cdot|E(N(w)\cap V_-, V_+^{(3)})|.
\end{align}
For each $w\in V_+^{(3)}$, it follows from Lemma \ref{lem:edge-kr-minor-free-bipartite} that
\begin{equation}\label{lem:size-V+-and-V-minus-eq:4}
|E(N(w)\cap V_-, V_+^{(3)})| \leq |E(V_-, V_+^{(3)})| \leq (r-2) |V_-| + O(1) \cdot |V_+^{(3)}|.
\end{equation}
For each $w\in V_+^{(4)}$, by Lemma \ref{lem:edge-kr-minor-free-bipartite} and the definition of $V_+^{(4)}$, we obtain
\begin{equation}\label{lem:size-V+-and-V-minus-eq:5}
|E(N(w)\cap V_-, V_+^{(3)})| \leq (r-2) |V_+^{(3)}| + O(\sqrt{n}).
\end{equation}
Then, by \eqref{lem:size-V+-and-V-minus-eq:3}, \eqref{lem:size-V+-and-V-minus-eq:4} 
and \eqref{lem:size-V+-and-V-minus-eq:5}, we can deduce that
\[
|\lambda_n|^2 \vol(V_+^{(3)}) \leq \vol(V_+^{(3)}) \big[(r-2) |V_-| + O(1) \cdot |V_+^{(3)}|\big] 
+ \vol(V_+^{(4)}) \big[(r-2) |V_+^{(3)}| + O(\sqrt{n})\big],
\]
which implies that
\begin{equation}\label{lem:size-V+-and-V-minus-eq:6}
|V_-| \geq \frac{|\lambda_n|^2}{r-2} - O(1) \cdot |V_+^{(3)}| -\big(|V_+^{(3)}| + O(\sqrt{n})\big) \frac{\vol(V_+^{(4)})}{\vol(V_+^{(3)})}.
\end{equation}
Note that, by \eqref{lem:size-V+-and-V-minus-eq:1} and Corollary \ref{cor:size-zi-and-neighbor-w0} (c), 
we have  $w_0\in V_+^{(3)}$, and so,
\begin{align}\label{lem:size-V+-and-V-minus-eq:7}
\vol(V_+^{(3)})\geq 1, \quad \vol(V_+^{(3)}) \leq \vol (V_+) = O(1).
\end{align}
Then, by \eqref{lem:size-V+-and-V-minus-eq:1}, \eqref{lem:size-V+-and-V-minus-eq:6}, \eqref{lem:size-V+-and-V-minus-eq:7}, 
and Lemma \ref{lem:bounds-lambda-1-n}, we have $|V_-|\geq n - O(\sqrt{n})$, and so, $|V_+| = O(\sqrt{n})$.

This completes the proof of Lemma \ref{lem:size-V+-and-V-minus}.
\end{proof}

In what follows, let
\begin{equation}\label{eq:new-V+-and-V-minus}
V_+^{'} = \{v\in V_+: |N(v)\cap V_-|\geq n - C_2\sqrt{n}\}, \quad V_+^{''} = V_+\setminus V_+^{'},
\end{equation}
where $C_2$ is some large constant and chosen in next Lemma.

\begin{lemma}\label{lem:vol-V+''-and-V+}
We have
\begin{align*}
\vol(V_+^{''}) & \leq \left(\frac{1}{r-1} + O\Big(\frac{1}{\sqrt{n}}\Big)\right) \vol(V_+^{'}), \\
\vol(V_+) & \leq \left(\frac{r}{r-1} + O\Big(\frac{1}{\sqrt{n}}\Big)\right) \vol(V_+^{'}).
\end{align*}
\end{lemma}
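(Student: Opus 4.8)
The plan is to repeat, at a finer scale, the volume-balancing argument used for Lemmas~\ref{lem:vol-v+} and~\ref{lem:size-V+-and-V-minus}, but now comparing $\vol(V_+^{''})$ against $\vol(V_+^{'})$ via the master inequality~\eqref{Vol-for-set}. First I would apply \eqref{Vol-for-set} with $S = V_+^{''}$, splitting the outer sum over $w \in V_+$ into the two pieces $w \in V_+^{'}$ and $w \in V_+^{''}$:
\begin{align*}
|\lambda_n|^2 \vol(V_+^{''}) \leq \sum_{w\in V_+^{'}} z_w\cdot|E(N(w)\cap V_-, V_+^{''})| + \sum_{w\in V_+^{''}} z_w\cdot|E(N(w)\cap V_-, V_+^{''})|.
\end{align*}
For the first sum, the point is that $V_+^{''}$ is small: since each $v \in V_+^{''}$ has $|N(v)\cap V_-| < n - C_2\sqrt{n}$, i.e.\ $v$ misses at least $C_2\sqrt{n}$ vertices of $V_-$, a double-counting of non-edges between $V_+^{''}$ and $V_-$ against Mader's bound (Theorem~\ref{thm:edge-kr-minor-free}) applied suitably — or more directly, noting $|V_-|\geq n-O(\sqrt n)$ from Lemma~\ref{lem:size-V+-and-V-minus} forces $|E(V_-,V_+^{''})| = O(n)$ on one hand, while on the other $e(G[V_-\cup V_+^{''}])$ is bounded — should yield $|V_+^{''}| = O(\sqrt{n})$, hence $|E(N(w)\cap V_-, V_+^{''})| \leq |E(V_-, V_+^{''})| = O(n)$ for each such $w$; combined with $\vol(V_+^{'})\geq 1$ (as $w_0\in V_+^{'}$, by Corollary~\ref{cor:size-zi-and-neighbor-w0}(c) and choosing $C_2$ large) this first sum is $\vol(V_+^{'})\cdot O(n)$ — not yet good enough, so the sharper estimate must instead bound $|E(N(w)\cap V_-, V_+^{''})|$ using Lemma~\ref{lem:edge-kr-minor-free-bipartite} on the bipartite graph between $V_-$ and $V_+^{''}$, giving $(r-2)|V_+^{''}| + O(1)\cdot|V_-| = O(n)$ — the honest route is that we actually want the $V_-$-side as the small side, so apply Lemma~\ref{lem:edge-kr-minor-free-bipartite} with $A = V_-$, $B = V_+^{''}$, obtaining $|E(V_-,V_+^{''})| \leq (r-2)|V_-| + C|V_+^{''}| = (r-2)n + O(\sqrt n)$, which is the crucial linear-in-$n$ bound with leading coefficient exactly $r-2$.

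For the second sum, for each $w\in V_+^{''}$ apply Lemma~\ref{lem:edge-kr-minor-free-bipartite} to the bipartite graph between $N(w)\cap V_-$ and $V_+^{''}$ with the small side being $V_+^{''}$ (of size $O(\sqrt n)$), which gives $|E(N(w)\cap V_-, V_+^{''})| \leq (r-2)|V_+^{''}| + O(\sqrt n) = O(\sqrt n)$; so the second sum is $O(\sqrt n)\cdot \vol(V_+^{''})$. Plugging both back in, and using $z_w \leq 1$ together with the total-volume bound $\vol(V_+^{'}) = \Theta(1)$ from Lemma~\ref{lem:vol-v+}, I would get
\begin{align*}
|\lambda_n|^2 \vol(V_+^{''}) \leq \big[(r-2)n + O(\sqrt n)\big]\vol(V_+^{'}) + O(\sqrt n)\vol(V_+^{''}).
\end{align*}
Rearranging and dividing by $|\lambda_n|^2 - O(\sqrt n) = (r-2)n + (r-3)\gamma_{n,r} + \cdots$, which by Lemma~\ref{lem:bounds-lambda-1-n} satisfies $|\lambda_n|^2 = (r-2)n + O(\sqrt n)$ — wait, more precisely $|\lambda_n|^2 \geq (\gamma_{n,r} - \tfrac{r-3}{2} - O(n^{-1/2}))^2 = (r-2)(n-r+2) - (r-3)\gamma_{n,r} + O(1) = (r-2)n - (r-3)\gamma_{n,r} + O(1)$ — hmm, that gives ratio $(r-2)/(r-2) = 1$, not $1/(r-1)$. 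The resolution must be that the first sum splits further: $w\in V_+^{'}$ contributes $z_w|E(N(w)\cap V_-,V_+^{''})|$ where $N(w)\cap V_-$ omits only $O(\sqrt n)$ vertices, and one should bound $|E(N(w)\cap V_-, V_+^{''})|$ not by all of $|E(V_-,V_+^{''})|$ but rather observe that $V_+^{''}\cup\{w\}$ together with the common neighborhood structure cannot be too dense without creating a $K_r$-minor — so the correct bound on $|E(V_-, V_+^{''} \cup V_+^{'})|$, i.e.\ $|E(V_-, V_+)|$, is $(r-2)|V_-| + C|V_+|$, and since every $w\in V_+^{'}$ already uses up nearly $(r-2)$-worth via its $\sim n$ edges to $V_-$... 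I would track the constants carefully so that the edges incident to $V_+^{'}$ saturate the $(r-2)n$ budget, leaving only $(r-2)|V_+^{''}|+O(\sqrt n)$-many edges from $V_-$ to $V_+^{''}$ available beyond what's forced — equivalently, apply Lemma~\ref{lem:edge-kr-minor-free-bipartite} to the bipartite graph between $V_-$ and $V_+$ and subtract the near-complete contribution of $V_+^{'}$.

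The main obstacle, and the step deserving the most care, is pinning down the coefficient $\tfrac{1}{r-1}$: it is not enough to bound individual edge-counts crudely, one must exploit that each vertex of $V_+^{'}$ is joined to all but $O(\sqrt n)$ vertices of $V_-$, so that a set of $r-1$ vertices drawn from $V_+^{'}$ (together with their huge common neighborhood in $V_-$, which has size $n - O(\sqrt n)$ by Lemma~\ref{lem:intersection-sets}) would already yield a $K_{r-1}$ joined to many vertices, and adding even one more richly-connected vertex would produce a $K_r$-minor; this is what limits $|V_+^{'}|$ and, through the edge-budget $(r-2)n$, limits how much volume can sit in $V_+^{''}$ relative to $V_+^{'}$. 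Concretely I expect the clean statement to come from: $|E(V_-, V_+)| \leq (r-2)|V_-| + C|V_+|$ by Lemma~\ref{lem:edge-kr-minor-free-bipartite}, every $w \in V_+^{'}$ contributes at least $|V_-| - C_2\sqrt n = n - O(\sqrt n)$ to the left side, hence $|V_+^{'}|\cdot(n - O(\sqrt n)) \leq (r-2)n + C|V_+| = (r-2)n + O(\sqrt n)$ giving $|V_+^{'}| \leq r-2$; then feeding $|V_+^{'}| \le r-2$ back into the volume inequality, the factor $(r-2)/|V_+^{'}| $-type bookkeeping converts into the $1/(r-1)$ and $r/(r-1)$ constants once one divides by $|\lambda_n|^2 = (r-2)n - (r-3)\gamma_{n,r} + O(1)$ and keeps the subleading $\gamma_{n,r} = \Theta(\sqrt n)$ term, which is precisely where the $O(1/\sqrt n)$ error terms in the statement originate. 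The second displayed inequality then follows immediately since $\vol(V_+) = \vol(V_+^{'}) + \vol(V_+^{''}) \leq (1 + \tfrac{1}{r-1} + O(n^{-1/2}))\vol(V_+^{'}) = (\tfrac{r}{r-1} + O(n^{-1/2}))\vol(V_+^{'})$.
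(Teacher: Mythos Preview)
Your approach has a genuine gap: you apply \eqref{Vol-for-set} with $S=V_+^{''}$, whereas the paper applies it with $S=V_+$. This difference is fatal. With your choice, the left side is $|\lambda_n|^2\vol(V_+^{''})$, and even with correct edge estimates the first sum contributes $\big[(r-2)n+O(\sqrt{n})\big]\vol(V_+^{'})$; after moving the second sum to the left you are left with
\[
\big[|\lambda_n|^2-(r-2)(n-C_2\sqrt{n})+O(\sqrt{n})\big]\vol(V_+^{''})\leq \big[(r-2)n+O(\sqrt{n})\big]\vol(V_+^{'}),
\]
and since the bracket on the left is only $\Theta(\sqrt{n})$ while the one on the right is $\Theta(n)$, you only obtain $\vol(V_+^{''})=O(\sqrt{n})\vol(V_+^{'})$, far from the claimed $O(1)$ ratio. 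The paper's trick is that with $S=V_+$ the left side carries the extra term $|\lambda_n|^2\vol(V_+^{'})$, which cancels the order-$n$ contribution $(r-2)n\,\vol(V_+^{'})$ on the right; what remains on \emph{both} sides is of order $\sqrt{n}$, and the ratio becomes the desired constant. Concretely, the paper rewrites the right side as $[(r-2)n+O(\sqrt{n})]\vol(V_+)-(r-2)C_2\sqrt{n}\,\vol(V_+^{''})$ and arrives at
\[
\vol(V_+^{''})\leq \frac{(r-2)n+O(\sqrt{n})-|\lambda_n|^2}{(r-2)C_2\sqrt{n}}\,\vol(V_+).
\]

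Two further points. First, your bound for the $w\in V_+^{''}$ term misapplies Lemma~\ref{lem:edge-kr-minor-free-bipartite}: the factor $(r-2)$ attaches to the \emph{large} side $N(w)\cap V_-$, not to $V_+^{''}$, so the correct bound is $(r-2)|N(w)\cap V_-|+O(|V_+^{''}|)\leq (r-2)(n-C_2\sqrt{n})+O(\sqrt{n})$, not $O(\sqrt{n})$. Second, you look for a structural origin of the constant $\tfrac{1}{r-1}$ through $|V_+^{'}|\leq r-2$ and an edge budget; in fact there is none. The numerator $(r-2)n+O(\sqrt{n})-|\lambda_n|^2$ is $O(\sqrt{n})$ by Lemma~\ref{lem:bounds-lambda-1-n}, so the displayed fraction is a constant depending on $C_2$, and the paper simply chooses $C_2$ large enough to force it below $\tfrac{1}{r}$ (any constant strictly below $1$ would do; $\tfrac{1}{r}$ is chosen so that after the algebra $\vol(V_+^{''})\leq \tfrac{1}{r-1}\vol(V_+^{'})$, which is what is needed later to conclude $\vol(V_+^{''})<1$). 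Your derivation of the second displayed inequality from the first, by adding $\vol(V_+^{'})$, is correct.
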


\begin{proof}
It follows from \eqref{Vol-for-set} that
\begin{align}\label{lem:vol-V+''-and-V+-eq:1}
|\lambda_n|^2\vol(V_+)
\leq  \sum_{w\in V_+'} z_w \cdot |E(N(w)\cap V_-,V_+)| 
+ \sum_{w\in V_+^{''}} z_w\cdot|E(N(w)\cap V_-,V_+)|.
\end{align}
For each $w\in V_+'$, it follows from Lemma \ref{lem:edge-kr-minor-free-bipartite} and 
Lemma \ref{lem:size-V+-and-V-minus} that
\begin{align}\label{lem:vol-V+''-and-V+-eq:2}
|E(N(w)\cap V_-, V_+)|
& \leq |E(V_-,V_+)| \leq (r-2)|V_-| + O(1) \cdot |V_+| \nonumber \\
& \leq (r-2)n + O(\sqrt{n}).
\end{align}
For each $w\in V_+^{''}$, by Lemma \ref{lem:edge-kr-minor-free-bipartite}, we have
\begin{align}\label{lem:vol-V+''-and-V+-eq:3}
|E(N(w)\cap V_-, V_+)|
& \leq (r-2)|N(w)\cap V_-| + O(1) \cdot |V_+| \nonumber \\
& \leq (r-2) (n - C_2\sqrt{n}) + O(\sqrt{n}).
\end{align}
Then, by \eqref{lem:vol-V+''-and-V+-eq:1}, \eqref{lem:vol-V+''-and-V+-eq:2} 
and \eqref{lem:vol-V+''-and-V+-eq:3}, we find that
\begin{align*}
|\lambda_n|^2 \vol(V_+) 
& \leq \vol(V_+') \left[ (r-2) n + O(\sqrt{n})\right] + \vol(V_+^{''}) \left[(r-2)(n-C_2\sqrt{n}) + O(\sqrt{n})\right] \\
& = \vol(V_+) \left[(r-2) n + O(\sqrt{n})\right] - \vol(V_+^{''})(r-2)C_2 \sqrt{n},
\end{align*}
which implies that
\begin{align}\label{lem:vol-V+''-and-V+-eq:4}
\vol(V_+^{''}) \leq \frac{(r-2)n + O(\sqrt{n}) - |\lambda_n|^2}{(r-2) C_2\sqrt{n}} \vol(V_+).
\end{align}
By Lemma \ref{lem:bounds-lambda-1-n}, we can choose some constant $C_2$ such that 
\begin{align}\label{lem:vol-V+''-and-V+-eq:5}
\frac{(r-2)n + O(\sqrt{n}) - |\lambda_n|^2}{(r-2)C_2\sqrt{n}} \leq \frac{1}{r} + O\Big(\frac{1}{\sqrt{n}}\Big).
\end{align}
Then, it follows from \eqref{lem:vol-V+''-and-V+-eq:4} and \eqref{lem:vol-V+''-and-V+-eq:5} that
\begin{align*}
\vol(V_+^{''}) \leq \left(\frac{1}{r} + O\Big(\frac{1}{\sqrt{n}}\Big)\right) \vol(V_+).
\end{align*}
Recall that $\vol(V_+)=\vol(V_+')+\vol(V_+^{''})$. From this, we can derive
\begin{align*}
\vol(V_+^{''})
& \leq \frac{\frac{1}{r} + O(\frac{1}{\sqrt{n}})}{1 - \frac{1}{r} - O(\frac{1}{\sqrt{n}})} \cdot\vol(V_+^{'}) \\
& = \left(\frac{1}{r-1} + O\Big(\frac{1}{\sqrt{n}}\Big)\right) \vol(V_+').
\end{align*}
We can also get that
\[
\vol(V_+) \leq \left(\frac{r}{r-1} + O\Big(\frac{1}{\sqrt{n}}\Big)\right) \vol(V_+^{'}).
\]

This completes the proof of Lemma \ref{lem:vol-V+''-and-V+}.
\end{proof}

Now, we shall study the eigen-components of the eigenvector $\bm{z}$ corresponding to $\lambda_n$.
\begin{lemma}\label{lem:large-degree}
The following conclusions hold:
\begin{enumerate}
\item[\textup{(a)}] $|V_+^{'}| = r-2$, and $w_0\in V_+^{'}$.

\item[\textup{(b)}] For each $v\in V(G)\setminus V_+^{'}$, $d_{V_-}(v)=O(\sqrt{n})$.

\item[\textup{(c)}] For each $v\notin V_+^{'}$, $|z_v| = O(n^{-1/2})$.
 
\item[\textup{(d)}] For each $v\in V_+'$, $z_v\geq 1 - O (n^{-1/2})$.

\item[\textup{(e)}] For each $v\in V_+^{'}$, $d(v) \geq n - O(\sqrt{n})$.
\end{enumerate}    
\end{lemma}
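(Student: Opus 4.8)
The plan is to leverage Lemma~\ref{lem:vol-V+''-and-V+} together with an averaging/counting argument to pin down the size of $V_+'$, and then propagate the structural consequences through the eigen-equations. First I would establish part~(a). Applying \eqref{Vol-for-set} with $S = V_+'$ and bounding $|E(N(w)\cap V_-, V_+')|$ by Lemma~\ref{lem:edge-kr-minor-free-bipartite}, we get an inequality of the shape $|\lambda_n|^2\vol(V_+') \leq (r-2)|V_+'|\cdot\vol(V_+') + (\text{lower order})\cdot\vol(V_+)$; combined with Lemma~\ref{lem:bounds-lambda-1-n} (so $|\lambda_n|^2 = (r-2)n - O(\sqrt{n})$) and the fact that each $w\in V_+'$ satisfies $|N(w)\cap V_-| \geq n - C_2\sqrt n$, one forces $|V_+'| \geq r-2$. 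For the reverse inequality, the key observation is that if $|V_+'| \geq r-1$, then choosing $r-1$ vertices of $V_+'$ and using Lemma~\ref{lem:intersection-sets} shows their common neighborhood inside $V_-$ has size at least $(r-1)(n - C_2\sqrt n) - (r-2)n = n - O(\sqrt n)$; picking one further vertex in $V_-$ (which has $\Omega(n)$ neighbours among these by Corollary~\ref{cor:size-zi-and-neighbor-w0}) and contracting would exhibit a $K_r$-minor on the $r-1$ vertices of $V_+'$ plus the branch sets inside $V_-$, contradicting $K_r$-minor freeness. Hence $|V_+'| = r-2$, and since $w_0\in V_+^{(3)}$ has $|N(w_0)\cap V_-| = \Omega(n)$ — in fact we can sharpen Corollary~\ref{cor:size-zi-and-neighbor-w0}(c) to $n - O(\sqrt n)$ using $|\lambda_n|^2 z_{w_0}\leq |N(w_0)\cap V_-|\cdot\vol(V_+)$ and $\vol(V_+)\leq\frac{r}{r-1}\vol(V_+') \leq \frac{r}{r-1}(r-2)$ — we conclude $w_0\in V_+'$.

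Next I would derive part~(b). For $v\in V_-$ this is immediate from the definition of $V_+^{(3)}$ once we know $v\notin V_+^{(3)}$, but the cleaner route is: every vertex of $V_-$ has $O(\sqrt n)$ neighbours in $V_-$ (since $G[V_-]$ is $K_r$-minor free, hence has $O(n)$ edges, but we need the per-vertex bound — actually we only need that $V_-$ vertices have few neighbours in $V_+$, combined with a bound on $d(v)$). More directly, for $v\notin V_+'$: if $v\in V_+''$ then by definition $|N(v)\cap V_-| < n - C_2\sqrt n$, which is not yet $O(\sqrt n)$, so this needs more work — I would argue that $V_+''$ vertices actually have $O(\sqrt n)$ neighbours in $V_-$ by a secondary application of \eqref{Vol-for-set} restricted to $V_+''$, using $\vol(V_+'') = O(n^{-1/2})\vol(V_+') = O(n^{-1/2})$ from Lemma~\ref{lem:vol-V+''-and-V+} together with $\vol(V_+') \leq (r-2)(1 + O(n^{-1/2}))$; and for $v\in V_0\cup V_-$ a direct counting argument using $e(G) = O(n)$ and the fact that the $r-2$ big vertices already account for most edges. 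Then part~(c): for $v\in V_+\setminus V_+'$, apply \eqref{Vol-for-vertex} to $s=v$, bound the right side by $|N(v)\cap V_-\cap N(w)|\leq |N(v)\cap V_-| = O(\sqrt n)$ for $w = w_0$ and by $\vol(V_+)\cdot|N(v)\cap V_-|$ in aggregate, giving $|\lambda_n|^2 z_v = O(\sqrt n)$, hence $z_v = O(n^{-1/2})$; for $v\in V_0$ trivially $z_v=0$; for $v\in V_-$ use Corollary~\ref{cor:size-zi-and-neighbor-w0}(a).

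Then parts~(d) and~(e) follow quickly. For~(d), fix $v\in V_+'$. By the eigen-equation $|\lambda_n| z_v = -\sum_{u\in N(v)} z_u \geq -\sum_{u\in N(v)\cap V_-} z_u - \sum_{u\in N(v)\cap V_+}z_u$; since $|N(v)\cap V_-| \geq n - C_2\sqrt n$ and each such $z_u = -\Theta(|z_u|)$ with $\vol(V_-)$ controlled, while the positive part is $O(\vol(V_+)) = O(1)$, we get $|\lambda_n| z_v \geq |\lambda_n|\cdot 1 - O(\sqrt n\cdot n^{-1/2}) = |\lambda_n| - O(1)$. Dividing by $|\lambda_n| = \Theta(\sqrt n)$ gives $z_v \geq 1 - O(n^{-1/2})$. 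For~(e): by part~(c) and the normalization, $\sum_{u\in V(G)\setminus V_+'}|z_u|$ is dominated by contributions from $V_-$, and applying the eigen-equation at $v\in V_+'$ once more, $|\lambda_n| z_v = -\sum_{u\in N(v)}z_u$; isolating the $V_-$-neighbours and using $z_v \geq 1 - O(n^{-1/2})$ forces $|N(v)\cap V_-|\cdot(\text{average }|z_u|)$ to be close to $|\lambda_n| = \gamma_{n,r} - O(1)$, and since each $|z_u| = O(n^{-1/2})$ this is impossible unless $d(v)\geq |N(v)\cap V_-| \geq n - O(\sqrt n)$. I expect the main obstacle to be part~(a)'s upper bound $|V_+'|\leq r-2$: making the $K_r$-minor extraction rigorous requires care in choosing disjoint branch sets — one needs $r-1$ singleton branch sets in $V_+'$ that are pairwise adjacent (which holds automatically if they are, but if $G[V_+']$ is not complete one must instead route connections through $V_-$) plus one more branch set in the large common neighbourhood, and verifying all $\binom{r}{2}$ adjacencies simultaneously is where the counting (Lemma~\ref{lem:intersection-sets}) must be deployed most carefully.
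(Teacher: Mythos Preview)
Your proposal contains two genuine gaps that would block the argument.

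First, the claim that you can sharpen Corollary~\ref{cor:size-zi-and-neighbor-w0}(c) to $|N(w_0)\cap V_-| \geq n - O(\sqrt{n})$ is wrong: from $|\lambda_n|^2 \leq |N(w_0)\cap V_-|\cdot\vol(V_+)$ together with $\vol(V_+) \leq \frac{r(r-2)}{r-1}+o(1)$ you obtain only $|N(w_0)\cap V_-| \geq \tfrac{r-1}{r}\,n - O(\sqrt{n})$, a fixed proper fraction of $n$, so this does not place $w_0$ in $V_+'$. The paper's route is different and does not go through degree: it proves $|V_+'| \leq r-2$ \emph{first} (via the minor construction), then combines this with Lemma~\ref{lem:vol-V+''-and-V+} to get $\vol(V_+'') \leq \tfrac{r-2}{r-1}+o(1)<1$, which forces $w_0\notin V_+''$ because $z_{w_0}=1$. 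With $w_0\in V_+'$ in hand, the paper rules out $|V_+'|\leq r-3$ by applying \eqref{Vol-for-vertex} at $w_0$: $|\lambda_n|^2 \leq |V_+'|\cdot n + \vol(V_+'')\cdot n \leq (r-3)\cdot\tfrac{r}{r-1}\,n+O(\sqrt n)$, contradicting Lemma~\ref{lem:bounds-lambda-1-n}. (Your inequality ``$|\lambda_n|^2\vol(V_+') \leq (r-2)|V_+'|\vol(V_+')+\cdots$'' does not follow from Lemma~\ref{lem:edge-kr-minor-free-bipartite}; that lemma yields a bound linear in $n$, not in $|V_+'|$.)

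Second, your approach to part~(b) fails. Lemma~\ref{lem:vol-V+''-and-V+} gives $\vol(V_+'') \leq \big(\tfrac{1}{r-1}+o(1)\big)\vol(V_+') = O(1)$, \emph{not} $O(n^{-1/2})$, so a volume argument cannot force individual vertices of $V_+''$ to have $V_-$-degree $O(\sqrt{n})$; and for $v\in V_0\cup V_-$ the global bound $e(G)=O(n)$ gives no per-vertex control. The paper's key idea here is another minor extraction: for any $v\notin V_+'$ with $d_{V_-}(v)>(r-1)C_2\sqrt{n}$, Lemma~\ref{lem:intersection-sets} shows that $v$ and the $r-2$ vertices $v_1,\ldots,v_{r-2}$ of $V_+'$ share more than $r-1$ common neighbours in $V_-$; taking $r-1$ of them as $u_1,\ldots,u_{r-1}$ and using branch sets $\{v_i,u_i\}$ for $i\leq r-2$, $\{v\}$, $\{u_{r-1}\}$ produces a $K_r$-minor, since each pair $\{v_i,u_i\}$, $\{v_j,u_j\}$ is joined by the edge $v_iu_j$ --- so $G[V_+']$ need not be complete, which also resolves your worry about the upper bound in~(a). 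Since part~(c) depends on~(b) precisely through $d_{V_-}(v)=O(\sqrt n)$, this gap is load-bearing. (Incidentally, part~(e) is immediate from the definition of $V_+'$: $d(v)\geq |N(v)\cap V_-|\geq n-C_2\sqrt{n}$.)
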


\begin{proof}
(a) As the first step, we claim that $|V_+^{'}|\leq r-2$.
Assume to the contrary, suppose that $|V_+^{'}|\geq r-1$. Without loss of generality, we may assume that 
$\{v_1,v_2,\ldots,v_{r-1}\}\subseteq V_+^{'}$. Then, for large enough $n$, by 
Lemma \ref{lem:intersection-sets} and \eqref{eq:new-V+-and-V-minus},
\begin{align*}
\left|\bigcap_{i=1}^{r-1} N_{V_-}(v_i)\right|
& \geq \sum_{i=1}^{r-1} |N_{V_-}(v_i)| - (r-2) \left|\bigcup_{i=1}^{r-1} N_{V_-}(v_i)\right| \\
& \geq (r-1) (n - C_2\sqrt{n}) - (r-2) |V_-| \\
& \geq (r-1) (|V_-| - C_2\sqrt{n}) - (r-2) |V_-| \\
& = |V_-| - (r-1) C_2\sqrt{n} \\
& > r-1.
\end{align*}
    
Let $u_1,u_2,\ldots,u_{r-1}$ be $r-1$ vertices in $\bigcap_{i=1}^{r-1} N_{V_-}(v_i)$.
By contracting the edges $v_1u_1$, $v_2u_2$,$\ldots$,$v_{r-2}u_{r-2}$ and combining the vertices 
$\{v_{r-1},u_{r-1}\}$, we obtain a $K_r$ minor of $G$, a contradiction. Hence, we have
\[
\vol(V_+^{'})\leq|V_+^{'}|\leq r-2,
\]
and by Lemma \ref{lem:vol-V+''-and-V+}, it is easy to see that
\[
\vol(V_+^{''})\leq \left(\frac{1}{r-1} + O\Big(\frac{1}{\sqrt{n}}\Big)\right)\cdot (r-2) < 1.
\]
This implies that $w_0\notin V_+^{''}$ since $z_{w_0} = 1$.
    
Next, we shall show that $|V_+^{'}| \geq r-2$. By contradiction, suppose that 
$|V_+^{'}|\leq r-3$. For $w_0\in V_+^{'}$, by \eqref{Vol-for-vertex}, we have
\begin{align*}
|\lambda_n|^2 = |\lambda_n|^2 z_{w_0} 
& \leq \sum_{w\in V_+} z_w \cdot |N(w)\cap V_-\cap N(w_0)| \\
& \leq |V_+^{'}| \cdot d_{V_-}(w_0) + \sum_{w\in V_+^{''}} d_{V_-}(w_0) z_w \\
& \leq |V_+^{'} |\cdot n + \vol(V_+^{''}) \cdot n. 
\end{align*}
It follows from Lemma \ref{lem:vol-V+''-and-V+} and $|V_+^{'}|\leq r-3$ that
\begin{align*}
|\lambda_n|^2
& \leq |V_+^{'}| \cdot n + \left(\frac{1}{r-1} + O\Big(\frac{1}{\sqrt{n}}\Big)\right) \vol(V_+^{'}) \cdot n \\
& \leq (r-3) n + \left(\frac{1}{r-1} + O\Big(\frac{1}{\sqrt{n}}\Big)\right)\cdot (r-3)\cdot n \\
& = \left(r-2 - \frac{2}{r-1}\right)n + O(\sqrt{n}),
\end{align*}
contradicting the lower bound of $|\lambda_n|$. Hence, $|V_+'|=r-2$.
    
(b) For each $v \notin V_+^{'}$, we shall prove $d_{V_-}(v) \leq (r-1)C_2\sqrt{n}$. 
Assume to the contrary, suppose that $d_{V_-}(v) > (r-1)C_2\sqrt{n}$.
Let $V_+^{'} = \{v_1,v_2,\ldots,v_{r-2}\}$. Then for large enough $n$, by 
Lemma \ref{lem:intersection-sets} and \eqref{eq:new-V+-and-V-minus},
\begin{align*}
\left|N_{V_-}(v)\cap \bigg(\bigcap_{i=1}^{r-2} N_{V_-}(v_i)\bigg)\right|
& \geq d_{V_-}(v) + \sum_{i=1}^{r-2} |N_{V_-}(v_i)| - (r-2)\left|N_{V_-}(v)\cup\left(\bigcup_{i=1}^{r-2}{N_{V_-}(v_i)}\right)\right| \\
& \geq d_{V_-}(v) + \sum_{i=1}^{r-2} |N_{V_-}(v_i)| -(r-2) |V_-| \\
& > (r-1)C_2\sqrt{n}+(r-2)(n-C_2\sqrt{n}) - (r-2)|V_-| \\
& > (r-1)C_2\sqrt{n} - (r-2)C_2\sqrt{n} \\
& > r-1.
\end{align*}
    
Let $u_1,u_2,\ldots,u_{r-1}$ be $r-1$ vertices in the common neighbors. By contracting the edges $v_1u_1, v_2u_2, \ldots, v_{r-2}u_{r-2}$ 
and combining the vertices $\{v,u_{r-1}\}$, we obtain a $K_r$ minor of $G$, which contradicts the fact that $G$ is 
$K_r$-minor free.
    
(c) Recall Corollary \ref{cor:size-zi-and-neighbor-w0} (a), we have $|z_v| = O(n^{-1/2})$ for each $v\in V_-$.
Thus, it is sufficient to consider the case $v\in V_+^{''}$.
By \eqref{Vol-for-vertex}, we have
\begin{align}\label{lem:large-degree-eq:1}
|\lambda_n|^2 z_v
& = \sum_{w\in V_+} z_w \cdot |N(w)\cap V_- \cap N(v)| \nonumber \\
& \leq \sum_{w\in V_+} z_w d_{V_-}(v) \nonumber \\
& = O(\sqrt{n}),
\end{align}
where the last equality follows from Lemma \ref{lem:vol-v+} and Lemma \ref{lem:large-degree} (b).
By Lemma \ref{lem:bounds-lambda-1-n} and (\ref{lem:large-degree-eq:1}), we have
\[
z_v \leq \frac{O(\sqrt{n})}{|\lambda_n|^2}=O\left(\frac{1}{\sqrt{n}}\right).
\]
    
(d) For each $v\in V_+'\setminus\{w_0\}$, we can deduce that
\begin{align}\label{lem:large-degree-eq:2}
|\lambda_n| (1 - z_v)
& = -\sum_{u\in N(w_0)} z_u + \sum_{w\in N(v)} z_w \nonumber \\
& = -\sum_{u\in N(w_0)\setminus N(v)} z_u 
+ \sum_{w\in N(v)\setminus N(w_0)} z_w \nonumber \\
& \leq \sum_{u\in (N(w_0)\setminus N(v))\cap V_-} |z_u| 
+ \sum_{w\in (N(v)\setminus N(w_0))\cap V_+} z_w \nonumber \\
& \stackrel{\footnotesize{\textcircled{\scriptsize{1}}}\footnotesize}
\leq |N_{V_-}(w_0)\backslash N_{V_-}(v)| \cdot O(\frac{1}{\sqrt{n}}) + \vol(V_+) \nonumber \\
& \stackrel{\footnotesize{\textcircled{\scriptsize{2}}}\footnotesize}
\leq (|V_-| - |N_{V_-}(v)|) \cdot O(\frac{1}{\sqrt{n}}) + O(1) \nonumber \\
& \stackrel{\footnotesize{\textcircled{\scriptsize{3}}}\footnotesize}
\leq \big( n - (n - C_2\sqrt{n})\big) \cdot O(\frac{1}{\sqrt{n}}) + O(1) \nonumber \\
& = O(1),
\end{align}   
where \normalsize{\textcircled{\scriptsize{1}}}\normalsize\enspace follows from Lemma \ref{lem:large-degree} (c), \normalsize{\textcircled{\scriptsize{2}}}\normalsize\enspace follows from Lemma \ref{lem:vol-v+}, 
and \normalsize{\textcircled{\scriptsize{3}}}\normalsize\enspace follows from \eqref{eq:new-V+-and-V-minus}.  
The result immediately follows from Lemma \ref{lem:bounds-lambda-1-n} and \eqref{lem:large-degree-eq:2}. 
    
(e) 
For each vertex $v\in V_+'$,  by \eqref{Vol-for-vertex}, we have 
\begin{align}\label{lem:large-degree-eq:4}
|\lambda_n|^2 z_v
& \leq \sum_{w\in V_+}  z_w \cdot |N(w)\cap V_-\cap N(v)| z_w \nonumber \\
& \leq \sum_{w\in V_+^{'}} d_{V_-}(v) z_w + O(\sqrt{n}) \nonumber \\
& \leq (r-2) d_{V_-}(v) + O(\sqrt{n}),
\end{align}
and so, by Lemma \ref{lem:bounds-lambda-1-n}, Lemma \ref{lem:large-degree} (d), and \eqref{lem:large-degree-eq:4}, we obtain
\begin{align*}
d(v) \geq d_{V_-}(v)
\geq \frac{|\lambda_n|^2}{r-2} \cdot z_v - O(\sqrt{n})
\geq n- O(\sqrt{n}).
\end{align*}
This completes the proof of Lemma \ref{lem:large-degree}.
\end{proof}

The Lemma \ref{lem:large-degree}(a) establishes that there are exactly $r-2$ vertices in $V_+'$ and $w_0 \in V_+'$.
Let us denote $V_+' = \{v_1=w_0,v_2,\ldots,v_{r-2}\}$. For convenience, we set
\begin{align} \label{def-LUV}
L & := V_+', \nonumber\\
U & := \{v\in V(G)\setminus L:~|N(v)\cap L| = r-2\},  \\
V & := V(G)\setminus (L\cup U). \nonumber
\end{align}
In the following consequence, we shall give some rough characterizations for $U$ and $V$.

\begin{lemma}\label{lem:L-U-V}
For sufficiently large $n$, the following statements hold. 
\begin{enumerate}
\item[\textup{(a)}] $|U|\geq n - O(\sqrt{n})$.

\item[\textup{(b)}] For each $v\in U$, $N(v)\cap U = \emptyset$ .

\item[\textup{(c)}] For each $v\in V$, $|N(v)\cap U|\leq 1$ 

\item[\textup{(d)}] For any vertices $p,q\in V$, if each of them have exactly 
one neighbor in $U$, set $N(p)\cap U =:\tilde{p}$, $N(q)\cap U =:\tilde{q}$.
If $\tilde{p}\neq \tilde{q}$, then $p$ and $q$ lie in different components of $G[V]$.
\end{enumerate}      
\end{lemma}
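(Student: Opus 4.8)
The plan is to establish the four parts essentially in order, with (b)--(d) all run through a single template: if the conclusion fails, the failure witnesses a $K_r$-minor in $G$, contradicting the hypothesis. The reusable device I will set up first is a \emph{clique-expansion trick}. Since $|U|\ge n-O(\sqrt n)$ and, by definition of $U$, every vertex of $U$ is adjacent to all $r-2$ vertices of $L=\{v_1,\dots,v_{r-2}\}$, for any fixed set $T\subseteq V(G)$ of size $O(1)$ and $n$ large we may choose $\binom{r-2}{2}$ distinct vertices $w_{ij}\in U\setminus T$, one for each pair $1\le i<j\le r-2$. Then $B_i:=\{v_i\}\cup\{w_{ij}:j>i\}$ is connected (a star centered at $v_i$, since every $w_{ij}\in U$ is adjacent to $v_i$), the $B_i$ are pairwise disjoint and contained in $L\cup U$, they are pairwise adjacent (for $i<j$ the vertex $w_{ij}\in B_i$ is adjacent to $v_j\in B_j$), and each $B_i$ contains $v_i$, which is adjacent to every vertex of $U$. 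So $B_1,\dots,B_{r-2}$ already realize a $K_{r-2}$-minor "attached to all of $U$," and in each application it remains only to adjoin two further branch sets, disjoint from the $B_i$ and from one another, each adjacent to a vertex of $U$ (hence to every $B_i$) and adjacent to each other.

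For (a) I would not use the trick but argue directly: by Lemma~\ref{lem:large-degree}(e) each $v_i\in L$ has $d(v_i)=n-O(\sqrt n)$, so the number of vertices non-adjacent to $v_i$ is $O(\sqrt n)$; a vertex of $V(G)\setminus L$ lies outside $U$ only if it is non-adjacent to some $v_i$, whence $|V|=|V(G)\setminus(L\cup U)|\le\sum_{i=1}^{r-2}\big(n-1-d(v_i)\big)=O(\sqrt n)$, and therefore $|U|=n-(r-2)-|V|\ge n-O(\sqrt n)$. This also justifies that $|U|$ is large enough for the trick.

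For (b), suppose $v,v'\in U$ with $v\sim v'$; apply the trick with $T=\{v,v'\}$ and adjoin the singletons $\{v\}$, $\{v'\}$. The resulting $r$ branch sets are pairwise disjoint and pairwise adjacent ($v\sim v'$; $v,v'\in U$ give $v,v'\sim v_i$; the trick gives $B_i\sim B_j$ and $B_i\sim\{v\},\{v'\}$), a $K_r$-minor. For (c), suppose $v\in V$ has two neighbors $u,u'\in U$; apply the trick with $T=\{u,u',v\}$ and adjoin $\{u,v\}$ and $\{u'\}$: the first is connected since $u\sim v$, the two are disjoint (and disjoint from the $B_i$, as $v\in V$, $u,u'\in U\setminus T$), they are adjacent via $v\sim u'$, and each is adjacent to every $B_i$ since $u,u'\in U$ --- a $K_r$-minor, whether or not $u\sim u'$. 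For (d), suppose $p,q\in V$ each have a unique $U$-neighbor, $\tilde p\ne\tilde q$, and $p,q$ lie in a common component of $G[V]$; let $P$ be a $p$--$q$ path inside $V$. Apply the trick with $T=\{\tilde p,\tilde q\}$ and adjoin $\{\tilde p\}\cup V(P)$ and $\{\tilde q\}$: the former is connected ($\tilde p\sim p$ and $P\subseteq V$ joins $p$ to $q$), the two new sets are disjoint from each other and from the $B_i$ (everything in $V(P)$ lies in $V$, while $\tilde p\ne\tilde q$ lie in $U\setminus T$), they are adjacent via $q\sim\tilde q$, and each is adjacent to every $B_i$ since $\tilde p,\tilde q\in U$ --- again a $K_r$-minor, contradiction.

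The only real care is bookkeeping: in each case checking that the $O(1)$-size set $T$ of already-used vertices can be avoided when picking the $\binom{r-2}{2}$ connectors --- immediate from $|U|\ge n-O(\sqrt n)$ and $n$ large --- and verifying that all $r$ branch sets are pairwise disjoint, which reduces to the fact that $L$, $U$, $V$ are pairwise disjoint by \eqref{def-LUV} together with $\tilde p\ne\tilde q$ in part~(d). I do not anticipate a genuine obstacle here; the substance has already been spent in Lemma~\ref{lem:large-degree}, which secures that $L$ consists of $r-2$ near-universal vertices and hence that $U$ is essentially all of $V(G)$.
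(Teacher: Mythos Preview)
Your proof is correct and follows essentially the same approach as the paper: part~(a) via the degree bound of Lemma~\ref{lem:large-degree}(e), and parts (b)--(d) by exhibiting an explicit $K_r$-minor whenever the conclusion fails. The paper's branch-set construction is a bit more economical (it pairs each $v_i$ with a single auxiliary $u_i\in U$ rather than using $\binom{r-2}{2}$ connectors $w_{ij}$, and it reduces (c),(d) to (b) by first contracting a path into an edge of $U$); note also a small slip in your write-up of (c) and (d), where you wrote ``$u,u'\in U\setminus T$'' and ``$\tilde p,\tilde q$ lie in $U\setminus T$'' --- it is the $w_{ij}$ that lie in $U\setminus T$, while $u,u',\tilde p,\tilde q\in T$, which is precisely why the $B_i$ avoid them --- but the intended argument is clear and sound.
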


\begin{proof}
(a) Note that $|U| = \big|\bigcap_{i=1}^{r-2} N(v_i)\big|$. Hence, by Lemma \ref{lem:intersection-sets} 
and Lemma \ref{lem:large-degree} (e),  we have
\begin{align*}
|U| & \geq \sum_{i=1}^{r-2} |N(v_i)| - (r-3) \left|\bigcup_{i=1}^{r-2} N(v_i)\right| \\
& \geq (r-2) \big(n - O(\sqrt{n})\big) - (r-3) n \\
& = n - O(\sqrt{n}).
\end{align*}
This proves statement (a).
    
(b) Assume to the contrary, suppose $|N(v)\cap U|\geq 1$ for some $v\in U$, then there is an edge $st\in E(G[U])$.
Since $|U|\geq n - O(\sqrt{n})$, we can select $r-3$ vertices $\{u_1,u_2,\ldots,u_{r-3}\}\subseteq U$.
Recall that $L = \{v_1,v_2,\ldots,v_{r-2}\}$. By contracting the edges $v_1u_1,v_2u_2,\ldots,v_{r-3}u_{r-3}$ 
and combining the vertices $\{v_{r-2},s,t\}$, we see that $G$ has a $K_r$ minor, a contradiction.

(c) Assume to the contrary, suppose that there is a vertex $v\in V$ such that $N(v)\cap U = \{s_1,s_2\}$.
By contracting the path $s_1vs_2$ into the edge $s_1s_2$, we create an edge within $U$. Using the 
same arguments as statement (b), we can identify a $K_r$-minor in $G$, leading to a contradiction.
    
(d) For two arbitrary vertices $p,q\in V$, suppose $p$ and $q$ have exactly one neighbor 
$\tilde{p}$ and $\tilde{q}$ in $U$, respectively, and $\tilde{p}\neq \tilde{q}$.
Assume, for contradiction, that $p$ and $q$ are connected by a path $P$ in $G[V]$.
Then, we can contract the path $\tilde{p}pPq\tilde{q}$ into an edge $\tilde{p}\tilde{q}$. 
Using the same arguments as statement (b), we can identify a $K_r$-minor in $G$, thereby proving statement (d).
\end{proof}

Recall that $\bm{x}$ is the normalized eigenvector corresponding to the spectral radius of $A(G)$, 
and $u_0$ is a vertex such that $x_{u_0}=1$. Next, we shall investigate the components of the eigenvector $\bm{x}$.

\begin{lemma}\label{lem:size-xi-in-L-and-U-V}
For sufficiently large $n$, we have
\begin{enumerate}
\item[\textup{(a)}] $u_0\in L$.

\item[\textup{(b)}] $x_v \geq 1 - O(\frac{1}{\sqrt{n}})$ for $v\in L\setminus\{u_0\}$.

\item[\textup{(c)}] $x_v = O(\frac{1}{\sqrt{n}})$ for $v\notin L$.
\end{enumerate}       
\end{lemma}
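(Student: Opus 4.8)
The plan is to mirror the structure of the analysis already carried out for the eigenvector $\bm{z}$, but now for the Perron vector $\bm{x}$ of $\lambda_1$, exploiting the structural information about $L$, $U$, $V$ obtained in Lemmas \ref{lem:large-degree} and \ref{lem:L-U-V}. The key geometric facts I would use are: $|U| \geq n - O(\sqrt{n})$; $U$ is an independent set with $N(v)\cap U = \emptyset$ for $v \in U$; each $v\in V$ has at most one neighbour in $U$; and $e(G) = O(n)$ by Theorem \ref{thm:edge-kr-minor-free} while $e(G\setminus L) = O(n)$ as well (in fact $G\setminus L$ is $K_{r-1}$-minor free since $G$ has $r-2$ dominating-ish vertices in $L$ of degree $n-O(\sqrt n)$, but even the cruder $O(n)$ bound on $e(G[U\cup V])$ suffices). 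I would also record at the outset that $\lambda_1 = \Theta(\sqrt{n})$ from Lemma \ref{lem:bounds-lambda-1-n}, which is the engine driving all the $O(1/\sqrt n)$ estimates.

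First I would bound $\sum_{v\notin L} x_v$, the analogue of $\vol(V_+)$. Write $W := V(G)\setminus L$. For each $v \in W$, the eigen-equation gives $\lambda_1 x_v = \sum_{u\in N(v)} x_u = \sum_{u\in N(v)\cap L} x_u + \sum_{u\in N(v)\cap W} x_u \leq (r-2) + \sum_{u\in N(v)\cap W} x_u$, since $|N(v)\cap L|\le r-2$ and $x_u\le 1$. Summing over $v\in W$ yields $\lambda_1 \sum_{v\in W} x_v \leq (r-2)|W| + 2e(G[W]) = (r-2)n + O(n)$, so $\sum_{v\in W} x_v = O(\sqrt n)$. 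That is too weak by itself; to get $O(1)$ I would split $W$ according to whether $d_W(v)$ (degree inside $W$) is large, exactly as in Lemma \ref{lem:vol-v+}: let $W_1 = \{v\in W: d_W(v)\ge \varepsilon n\}$, so $|W_1| = O(1)$ by $e(G[W]) = O(n)$, and on $W_2 = W\setminus W_1$ run the same volume-transfer inequality ($\lambda_1^2 \sum_{W_2} x_v \le \sum_{W_1} x_w\cdot O(n) + \sum_{W_2} x_w\cdot(\text{small})$), using that the bipartite subgraph between $W$ and $W_2$ has $O(n)$ edges with only $O(\varepsilon n)$ attributable to each $W_2$-vertex, to conclude $\sum_{W_2} x_v = O(1)\cdot\sum_{W_1} x_v$ and hence $\sum_{v\in W} x_v = O(1)$.

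Given $\sum_{v\notin L} x_v = O(1)$, part (c) follows quickly: for $v\in U$, $\lambda_1 x_v = \sum_{u\in N(v)\cap L} x_u + \sum_{u\in N(v)\cap(U\cup V)} x_u$; the first sum is at most $r-2 = O(1)$ (here we use $x_u\le 1$, which is fine, but we actually want it bounded, and $r-2$ is a constant) and the second sum is at most $\sum_{w\notin L} x_w = O(1)$ (using $N(v)\cap U=\emptyset$ and $x_u \le 1$ on the $L$-part), so $x_v \le O(1)/\lambda_1 = O(1/\sqrt n)$. For $v\in V$ the same computation works verbatim: $\lambda_1 x_v \le (r-2) + \sum_{w\notin L} x_w = O(1)$. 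Thus $x_v = O(1/\sqrt n)$ for every $v\notin L$, proving (c). Part (a) is then immediate: since $x_{u_0} = 1$ and every vertex outside $L$ has $x$-value $O(1/\sqrt n) < 1$ for $n$ large, we must have $u_0\in L$. For part (b), fix $v\in L\setminus\{u_0\}$ and compare eigen-equations at $u_0$ and $v$: $\lambda_1(1 - x_v) = \sum_{u\in N(u_0)} x_u - \sum_{w\in N(v)} x_w = \sum_{u\in N(u_0)\setminus N(v)} x_u - \sum_{w\in N(v)\setminus N(u_0)} x_w \le \sum_{u\in N(u_0)\setminus N(v)} x_u$. By Lemma \ref{lem:large-degree}(e), $d(u_0), d(v) \ge n - O(\sqrt n)$, so the symmetric difference $N(u_0)\triangle N(v)$ has size $O(\sqrt n)$; moreover all but at most $r-3$ of the vertices in $N(u_0)\setminus N(v)$ lie outside $L$ and hence have $x$-value $O(1/\sqrt n)$. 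Therefore $\sum_{u\in N(u_0)\setminus N(v)} x_u \le (r-3)\cdot 1 + O(\sqrt n)\cdot O(1/\sqrt n) = O(1)$, giving $1 - x_v \le O(1)/\lambda_1 = O(1/\sqrt n)$, which is (b).

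The main obstacle I anticipate is the same one as in Lemma \ref{lem:vol-v+}: upgrading the trivial bound $\sum_{v\notin L} x_v = O(\sqrt n)$ to the sharp $O(1)$. This requires the two-level partition of $W$ by internal degree together with the bipartite edge-count estimate (Lemma \ref{lem:edge-kr-minor-free-bipartite} or just Theorem \ref{thm:edge-kr-minor-free}), and one must be slightly careful that the coefficient multiplying $\sum_{W_2} x_v$ on the right-hand side of the volume-transfer inequality is genuinely $o(\lambda_1^2) = o(n)$ so that it can be absorbed — this is where choosing $\varepsilon$ small and invoking $\lambda_1 = \Theta(\sqrt n)$ is essential. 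Everything after that ((a), (b), (c)) is a short routine deduction from $\sum_{v\notin L} x_v = O(1)$ and the degree bounds already in hand.
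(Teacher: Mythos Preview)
Your central claim, that $\sum_{v\notin L} x_v = O(1)$, is false. In the extremal graph $K_{r-2}\vee(n-r+2)K_1$ every vertex $v$ of the independent set has $x_v = (r-2)/\lambda_1 = \Theta(n^{-1/2})$, so $\sum_{v\notin L} x_v = \Theta(\sqrt n)$. The analogy with Lemma~\ref{lem:vol-v+} is inverted: there $V_+$ is the \emph{small} side (of size $O(\sqrt n)$), whereas your $W=V(G)\setminus L$ is essentially the whole graph. The volume-transfer inequality in Lemma~\ref{lem:vol-v+} also relies on sign cancellation (one drops the same-sign neighbours), which is unavailable for the all-positive Perron vector; the contribution $\lambda_1\sum_{u\in N(v)\cap L}x_u\approx (r-2)\lambda_1$ for each of the $\approx n$ vertices $v\in U$ makes your proposed inequality $\lambda_1^2\sum_{W_2}x_v\le \sum_{W_1}x_w\cdot O(n)+\sum_{W_2}x_w\cdot(\text{small})$ fail outright.

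The paper avoids any global volume bound. For each $v\notin L$ it expands $\lambda_1^2 x_v$ two levels deep, using only that $|N(v)\cap(L\cup U)|\le r-2$ for $v\notin L$ (from Lemma~\ref{lem:L-U-V}\,(b),(c) and the definition of $V$) together with $|V|=O(\sqrt n)$ and $e(G[V])=O(|V|)$, to get $\lambda_1^2 x_v\le (r-2)\lambda_1+O(|V|)=O(\sqrt n)$, which yields (c) directly. Your scheme is easily repaired by a much weaker claim: summing the eigen-equation over $V$ (not over all of $W$) gives $\lambda_1\sum_{v\in V}x_v\le (r-2)|V|+2e(G[V])=O(\sqrt n)$, hence $\sum_{v\in V}x_v=O(1)$; then for any $v\notin L$ one has $\lambda_1 x_v\le (r-2)+\sum_{u\in V}x_u=O(1)$, and (c) follows. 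Your arguments for (a) and (b) are correct and essentially coincide with the paper's.
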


\begin{proof}
We now simultaneously establish (a) and (c). For each $v\notin L$, we have
\begin{align}\label{lem:size-xi-in-L-and-U-V-eq:1}
\lambda_1^2x_v
& = \lambda_1 \sum_{u\in N(v)} x_u \nonumber \\
& = \lambda_1 \Bigg(\sum_{u\in N(v)\cap U} x_u + \sum_{u\in N(v)\cap L} x_u
+ \sum_{u\in N(v)\cap V} x_u\Bigg) \nonumber \\
& \stackrel{\footnotesize{\textcircled{\scriptsize{1}}}\footnotesize}
\leq (r-2)\lambda_1 + \sum_{u\in N(v)\cap V}{\sum_{w\in N(u)}{x_w}} \nonumber \\
& = (r-2)\lambda_1 + \sum_{u\in N(v)\cap V} 
\Bigg(\sum_{w\in N(u)\cap U} x_w + \sum_{w\in N(u)\cap L} x_w + \sum_{w\in N(u)\cap V} x_w \Bigg) \nonumber \\
& \stackrel{\footnotesize{\textcircled{\scriptsize{2}}}\footnotesize}
\leq (r-2) \lambda_1 + |N(v)\cap V| + (r-3) |N(v)\cap V| + \sum_{u\in N(v)\cap V} \sum_{w\in N(u)\cap V} x_w \nonumber \\
& \leq (r-2) \lambda_1 + (r-2) |N(v)\cap V| + 2|E(G[V])| \nonumber \\
& \stackrel{\footnotesize{\textcircled{\scriptsize{3}}}\footnotesize} = (r-2)\lambda_1 + O(|V|).
\end{align}
Here, both \normalsize{\textcircled{\scriptsize{1}}}\normalsize\enspace and \normalsize{\textcircled{\scriptsize{2}}}\normalsize\enspace can be derived from Lemma \ref{lem:L-U-V} (b),(c) and \eqref{def-LUV}, and \normalsize{\textcircled{\scriptsize{3}}}\normalsize\enspace follows from Theorem \ref{thm:edge-kr-minor-free}. 
By Lemma \ref{lem:L-U-V} (a), we obtain
\begin{align}\label{lem:size-xi-in-L-and-U-V-eq:2}
|V| = n - |L| - |U| \leq n-(r-2) - (n-O(\sqrt{n})) = O(\sqrt{n}).
\end{align}
It follows from Lemma \ref{lem:bounds-lambda-1-n}, \eqref{lem:size-xi-in-L-and-U-V-eq:1} and \eqref{lem:size-xi-in-L-and-U-V-eq:2} that
\[
x_v \leq \frac{r-2}{\lambda_1}+\frac{O(|V|)}{\lambda_1^2} = O (\frac{1}{\sqrt{n}}),
\]
and so, $u_0\in L$ since $x_{u_0} = 1$. 
    
(b) We consider an arbitrary vertex $v\in L\setminus\{u_0\}$.
By the eigen-equation with respect to $v$ and Lemma \ref{lem:bounds-lambda-1-n}, we have
\begin{align} \label{eq:lambda_1}
\lambda_1 (1 - x_v)
& = \lambda_1 (x_{u_0} - x_v) \nonumber\\
& = \sum_{u\in N(u_0)} x_u - \sum_{w\in N(v)} x_w  \nonumber\\
& = \sum_{u\in N(u_0)\setminus N(v)} x_u - \sum_{w\in N(v)\setminus N(u_0)} x_w  \nonumber\\
& \leq \sum_{u\in V} x_u + (r-3) x_{u_0}  \nonumber\\
& \leq |V|\cdot O (\frac{1}{\sqrt{n}}) + r-3~~~~~\text{(by Lemma \ref{lem:size-xi-in-L-and-U-V} (c))} \nonumber \\
& = O(1). ~~~~~~~~~~~~~~~~~~~~~~~~\text{(by  \eqref{lem:size-xi-in-L-and-U-V-eq:2} )}
\end{align}
The result immediately follows from Lemma \ref{lem:bounds-lambda-1-n} and \eqref{eq:lambda_1}. 
\end{proof}

Now, we are ready to show the main theorem of this section which indicate that  $G$ contains $r-2$ vertices that are adjacent to each of 
the rest of the $n-r+2$ vertices for $n$ is sufficiently large.

\begin{theorem}\label{thm:structure-kr-minor-free-max-spread}
Let $r\geq 3$, and $G$ be an $n$-vertex $K_r$-minor free graph with maximum spread. 
If $n$ is sufficiently large, we have $G= G[L]\vee (n - r + 2) K_1$, where $G[L]\subseteq K_{r-2}$.
\end{theorem}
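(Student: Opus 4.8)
The plan is to combine the structural facts already established in Section \ref{sec3} with an extremality argument based on the spread. From Lemma \ref{lem:large-degree}(e) and Lemma \ref{lem:L-U-V} we know that every vertex of $L=V_+'$ has degree $n-O(\sqrt n)$, the set $U$ has size $n-O(\sqrt n)$ and is independent, each vertex of $V$ has at most one neighbour in $U$, and $|V|=O(\sqrt n)$ by \eqref{lem:size-xi-in-L-and-U-V-eq:2}. It therefore suffices to prove two things: (i) $V=\emptyset$, so that $V(G)=L\cup U$ with $U$ independent and $|U|=n-r+2$; and (ii) every vertex of $U$ is adjacent to \emph{all} of $L$, i.e.\ there are no ``missing'' $L$--$U$ edges. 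Once (i) and (ii) hold, $G$ is exactly $G[L]\vee(n-r+2)K_1$ with $G[L]\subseteq K_{r-2}$, which is the claim. (The fact that $G[L]\subseteq K_{r-2}$ is automatic since $|L|=r-2$.)

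For step (i), suppose $V\neq\emptyset$. The idea is that a vertex in $V$ contributes very little to the spread: by Lemma \ref{lem:size-xi-in-L-and-U-V}(c) and Lemma \ref{lem:large-degree}(c), both the $\bm x$- and $\bm z$-entries of vertices outside $L$ are $O(1/\sqrt n)$, while the relevant ``mass'' of the quadratic forms $\bm x^{\!\top}\!A(G)\bm x$ and $\bm z^{\!\top}\!A(G)\bm z$ is concentrated on edges incident to $L$. The plan is to build a strictly better competitor graph $G'$: delete all edges inside $V$ and all edges between $V$ and $U$ (there are $O(n)$ such edges by Theorem \ref{thm:edge-kr-minor-free}, each contributing $O(1/n)$ to either Rayleigh quotient), and instead join every vertex of $V$ to all of $L$. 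This keeps $G'$ a subgraph of $K_{r-2}\vee (n-r+2)K_1$, hence $K_r$-minor free, and it changes $\lambda_1$ and $\lambda_n$ by a controlled amount: using the test vectors $\bm x$ and $\bm z$ (suitably renormalised, with the new $L$--$V$ edges contributing a positive $\Theta(|V|)\cdot(1\mp O(1/\sqrt n))$ term to $\lambda_1(G')$ and $-\lambda_n(G')$ respectively, while the deleted edges cost only $o(1)$), one gets $s(G') > s(G)$ unless $V=\emptyset$, contradicting maximality. The cleanest way to organise this is to show directly that if some vertex $v\in V$ fails to be adjacent to all of $L$, then re-wiring $v$ to be adjacent to exactly $L$ strictly increases both $\lambda_1$ (via the Rayleigh quotient with $\bm x$, since $x$-entries on $L$ are $\ge 1-O(1/\sqrt n)$ while $v$'s current neighbours outside $L$ have $x$-entries $O(1/\sqrt n)$) and $-\lambda_n$ (similarly with $\bm z$, since $z_v$ has a definite sign once $v$ is joined to $L$), and that this operation preserves $K_r$-minor-freeness.

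Step (ii) is then the same local move applied to $U$: if $u\in U$ is adjacent to only $r-3$ (or fewer — but Lemma \ref{lem:L-U-V} and the definition \eqref{def-LUV} of $U$ already force it to be adjacent to either all $r-2$ or to lie outside $U$) vertices of $L$... more precisely, after step (i) every vertex outside $L$ lies in $U$, and $U$ is the set of vertices adjacent to all of $L$ \emph{together with} possibly some vertices adjacent to fewer; for the latter, joining them to all of $L$ again strictly increases the spread by the same Rayleigh-quotient estimate, and the resulting graph is still a subgraph of $K_{r-2}\vee(n-r+2)K_1$. The main obstacle I anticipate is \textbf{making the perturbation argument quantitatively airtight}: one must verify that adding $\Theta(|V|)$ or $\Theta(1)$ edges incident to $L$ genuinely raises $\lambda_1$ and lowers $\lambda_n$ by \emph{strictly} more than the $o(1)$ error from deleting the $O(n)$ low-weight edges, and that the test-vector bounds from Lemmas \ref{lem:large-degree} and \ref{lem:size-xi-in-L-and-U-V} are not disturbed by the re-wiring (they were proved for the extremal $G$, so one should phrase the argument as ``the re-wired graph would have strictly larger spread, contradiction'' rather than iterating). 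Handling the degenerate small cases ($r=3$, where $|L|=1$) and confirming that $\bm z$ restricted to $L$ stays bounded away from $0$ after any hypothetical move are the fiddly points; everything else is bookkeeping with the $O(\cdot)$ estimates already in hand.
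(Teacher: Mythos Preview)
Your overall strategy---re-wire the vertices of $V$ so that each is adjacent exactly to $L$, observe the result is a subgraph of $K_{r-2}\vee(n-r+2)K_1$ and hence $K_r$-minor free, and compare spreads via Rayleigh quotients with $\bm x$ and $\bm z$---is exactly the paper's approach. Two points, however, need correction.

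First, step (ii) is superfluous. By the definition \eqref{def-LUV}, $U$ is \emph{precisely} the set of vertices outside $L$ with $|N(v)\cap L|=r-2$, so once $V=\emptyset$ every remaining vertex is already joined to all of $L$. There is nothing further to prove.

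Second, your $\lambda_n$ argument has a real gap. The entries $z_v$ for $v\in V$ are not controlled in sign: $V$ may meet $V_+''$ or $V_0$, so some $z_v$ can be positive. In that case, adding an edge from $v$ to $L$ (where $z\approx 1$) \emph{increases} $\bm z^{\top}A\bm z$, which is the wrong direction. Your phrase ``$z_v$ has a definite sign once $v$ is joined to $L$'' conflates the eigenvector of $G'$ with the test vector you are actually using (which is the eigenvector of $G$). The paper's fix is to use a modified test vector $\tilde{\bm z}$ with $\tilde z_u=z_u$ on $L\cup U$ and $\tilde z_u=-|z_u|$ on $V$; then $\tilde{\bm z}^{\top}\tilde{\bm z}=\bm z^{\top}\bm z$, and every $L$--$V$ edge in $G'$ contributes $2z_a(-|z_b|)\le 0$, giving the desired strict decrease. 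The paper also organises the edge-deletion count by iteratively peeling $V$: at each step one removes a vertex of degree at most $2C$ in what remains of $G[V\cup U]$ (possible since the average degree is bounded by Theorem~\ref{thm:edge-kr-minor-free}). This yields a clean per-vertex loss of $2C\cdot O(n^{-1/2})\cdot x_{u_i}$ against a per-vertex gain of $(1-O(n^{-1/2}))\cdot x_{u_i}$, making the strict inequality immediate. Your ``$O(n)$ edges'' count is valid but loose---there are in fact only $O(\sqrt n)$ such edges, since $|V|=O(\sqrt n)$ and each $v\in V$ has at most one neighbour in $U$---and without the peeling you would need some lower bound on $\sum_{v\in V}x_v$ to make the global comparison work.
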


\begin{proof}
By Lemma \ref{lem:L-U-V}, it suffices to show that $V = \emptyset$. Suppose that $|V| = t\geq 1$. 
By Theorem \ref{thm:edge-kr-minor-free}, we know that the average degree of $G[V]$ is less than $2C$. 
Thus, there is a vertex $u_1\in V$ such that $d_{G[V]}(u_1)\leq 2C-1$. 
By Lemma \ref{lem:L-U-V} (c), we can conclude that 
$d_{G[V \cup U]}(u_1) \leq 2C.$
Then deleting the vertex 
$u_1$ and the all edges in $G[V]$ incident to $u_1$. We can obtain the graph $G[V\setminus\{u_1\}]$. 
Clearly, $G[V\setminus \{u_1\}]$ is $K_r$-minor free and the average degree of $G[V\setminus\{u_1\}]$ 
is also less than $2C$. Hence there is $u_2\in V_2$, and $d_{G[V\setminus\{u_1\}]} (u_2)\leq 2C-1$.
Now, repeat the above process until the last vertex $u_t$ in $V$, then the vertices of $V$ can be 
rearranged by $u_1,\ldots,u_t$.
    
Let $G'$ be obtained from $G$ by removing the $2C$ edges (possibly less than $2C$ edges) in $G[V \cup U]  $ incident 
to $u_i\in V$ sequentially, and then connect $u_i$ to those vertices in $L$ that are not adjacent 
to $u_i$. It is easy to see $G'$ is also $K_r$-minor free, and $G'\subseteq K_{r-2}\vee (n - r + 2) K_1$.
    
Furthermore,  we firstly claim that $\lambda_1(G') > \lambda_1(G)$. Since
\begin{align*}
\bm{x}^\mathrm{T}\bm{x}\cdot\lambda_1(G')
& \geq \bm{x}^\mathrm{T} A(G') \bm{x} \\
& \geq \bm{x}^\mathrm{T} A(G) \bm{x}-2 \sum_{u_i\in V}  2C\cdot O(\frac{1}{\sqrt{n}})\cdot x_{u_i} + 2\sum_{u_i\in V} x_{u_i}\cdot \min\{x_{v_1},\ldots,x_{v_{r-2}}\} \\
& \geq \bm{x}^\mathrm{T} A(G) \bm{x}-2 \sum_{u_i\in V}  2C\cdot O(\frac{1}{\sqrt{n}})\cdot x_{u_i} + 2\sum_{u_i\in V}  x_{u_i} \big(1 - O(\frac{1}{\sqrt{n}})\big) \\
& > \bm{x}^\mathrm{T} A(G) \bm{x},
\end{align*}
where the second inequality and the third inequality  follow from Lemma \ref{lem:size-xi-in-L-and-U-V} (c) and (b), respectively. 

We also claim that $\lambda_n(G') < \lambda_n(G)$. In fact, let $\tilde{z}$ be the vector such 
that $\tilde{z}_u = z_u$ if $u\in L \cup U$ and $\tilde{z}_u = -|z_{u_i}|$  if $u\in V$. Then, by Lemma \ref{lem:large-degree} (c) and (d), we derive that
\begin{align*}
\tilde{\bm{z}}^\mathrm{T} \tilde{\bm{z}}\cdot\lambda_n(G')
& \leq \tilde{\bm{z}}^{\mathrm{T}} A(G')\tilde{\bm{z}} \\
& \leq \bm{z}^{\mathrm{T}} A(G)\bm{z} + 2\sum_{u_i\in V} \sum_{y\in N_{G[V\cup U]}(u_i)}|z_y z_{u_i}| 
- 2\sum_{u_i\in V} |z_{u_i}|\cdot \min\{z_{v_1},\ldots,z_{v_{r-2}}\} \\
& \leq \bm{z}^\mathrm{T} A(G)\bm{z} + 2\sum_{u_i\in V} 2C\cdot O(\frac{1}{\sqrt{n}})\cdot |z_{u_i}|
- 2\sum_{u_i\in V} |z_{u_i}|\cdot \big(1 - O (\frac{1}{\sqrt{n}})\big) \\
& < \bm{z}^{\mathrm{T}} A(G) \bm{z}.
\end{align*}
Therefore, $s(G') = \lambda_1(G') - \lambda_n(G') > \lambda_1(G) - \lambda_n(G) = s(G)$,  a contradiction. This completes the proof.
\end{proof}

\section{The extremal graph of $K_{r}$-minor free graphs with maximum spread}
\label{sec4}

In Section \ref{sec3}, we proved that for sufficiently large $n$, 
an $n$-vertex $K_{r}$-minor 
free graph $G$ with maximum spread admits a partition of $V(G) = L\cup U$ such that $G[U]$ is an independent set of size $n-r+2$. In this section, using 
the novel Laurent series expansion method developed by Li, Linz, Lu, and Wang \cite{LLLW}, we will  give the proof of Theorem \ref{thm:spread-kr-minor-free}.

\begin{lemma}\label{lem:spread-series-expansion}
Let $r\geq3$, and $G$ be the $n$-vertex $K_r$-minor free graph with maximum spread.
If $n$ is large enough, then the spread of $G$ satisfies 
\[
s(G) = 2\gamma_{n,r} +  \frac{1}{r-2}\left(-\frac{3}{4(r-2)}\ell_1^2+\ell_2\right) 
\cdot \frac{1}{\gamma_{n,r}} + O\left(\frac{1}{\gamma_{n,r}^3}\right),
\]
where
\begin{equation}\label{lem:spread-series-expansion-eq:1}
\ell_1 = \bm{1}^{\mathrm{T}} A(G[L]) \bm{1}, ~~
\ell_2 = \bm{1}^{\mathrm{T}} A(G[L])^2 \bm{1}.
\end{equation}
\end{lemma}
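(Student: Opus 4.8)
The structure theorem tells us $G = G[L] \vee (n-r+2)K_1$ with $G[L] \subseteq K_{r-2}$, so the adjacency matrix has a convenient block form: with $|L| = r-2$ and $m := n-r+2$, write $A(G) = \begin{pmatrix} A_L & J_{(r-2)\times m} \\ J_{m\times(r-2)} & 0 \end{pmatrix}$ where $A_L = A(G[L])$. The plan is to compute $s(G) = \lambda_1(G) - \lambda_n(G)$ by quotient-matrix / equitable-partition-type reasoning. First I would observe that any eigenvector for a nonzero eigenvalue $\mu \notin \{\lambda : \text{supported on } U\}$ must be constant on $U$ (since all vertices of $U$ have the same neighborhood $L$), so it is determined by its restriction $\bm{y}$ to $L$ together with the common value $c$ on $U$. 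The eigen-equations reduce to $\mu \bm{y} = A_L \bm{y} + c\,\bm{1}$ and $\mu c = \bm{1}^{\mathrm T}\bm{y}$, i.e. the extremal eigenvalues of $A(G)$ (other than the eigenvalue $0$ coming from $U$) are exactly the roots of $\det\!\big(\mu(\mu I - A_L) - J\big) = 0$ on $L$, equivalently $\mu = \tfrac{1}{\mu}\bm{1}^{\mathrm T}(\mu I - A_L)^{-1}\bm{1}\cdot\mu$ — more usefully, $\mu^2 = \bm{1}^{\mathrm T}(\mu I - A_L)^{-1}\bm{1}\cdot\mu$ needs rewriting; the clean characterization is that $\lambda_1$ and $\lambda_n$ are the largest and smallest roots of
\[
f(\mu) := \mu - \frac{1}{\mu}\,\bm{1}^{\mathrm T}\!\left(I - \tfrac{1}{\mu}A_L\right)^{-1}\!\bm{1} = 0,
\]
valid for $|\mu| > \lambda_1(A_L) = O(1)$, which certainly holds for $\mu = \lambda_1(G), \lambda_n(G)$ by Lemma~\ref{lem:bounds-lambda-1-n} since $|\lambda_1(G)|, |\lambda_n(G)| = \Theta(\sqrt{n})$.

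Next I would expand $\big(I - \tfrac1\mu A_L\big)^{-1} = \sum_{k\ge0}\mu^{-k}A_L^k$ as a Laurent series in $1/\mu$ (this is the Li--Linz--Lu--Wang method), giving
\[
\mu^2 = (r-2) + \frac{\ell_1}{\mu} + \frac{\ell_2}{\mu^2} + O\!\left(\frac{1}{\mu^3}\right),
\]
where $\ell_0 = \bm{1}^{\mathrm T}\bm{1} = r-2$, $\ell_1 = \bm{1}^{\mathrm T}A_L\bm{1}$, $\ell_2 = \bm{1}^{\mathrm T}A_L^2\bm{1}$ as in \eqref{lem:spread-series-expansion-eq:1}, and the tail is genuinely $O(\mu^{-3})$ because $\|A_L\|$ and $\|\bm 1\|$ are bounded by constants depending only on $r$. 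So $\lambda_1(G)$ and $\lambda_n(G)$ are the two real roots of large absolute value of $\mu^2 = (r-2) + \ell_1\mu^{-1} + \ell_2\mu^{-2} + O(\mu^{-3})$; note $r-2 = \gamma_{n,r}^2/(n-r+2)$ is \emph{not} the right leading term — wait, I must restore the $n$-dependence. The point I glossed: the reduced equation must actually read $\mu^2 = m\cdot(r-2) + (\text{lower order in }n)$ once one tracks that there are $m = n-r+2$ vertices in $U$. Redoing the block computation carefully: $\mu c = \bm 1^{\mathrm T}\bm y$ and $\mu\bm y = A_L\bm y + mc\,\bm 1$ — no: $\mu\bm y = A_L\bm y + c\,J_{(r-2)\times m}\bm 1_m = A_L\bm y + cm\,\bm 1$? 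No, $J_{(r-2)\times m}$ times the constant-$c$ vector on $U$ gives $cm\,\bm 1_{r-2}$. And $\mu c \bm 1_m = J_{m\times(r-2)}\bm y = (\bm 1^{\mathrm T}\bm y)\bm 1_m$, so $\mu c = \bm 1^{\mathrm T}\bm y$. Thus $\mu\bm y = A_L\bm y + m c\,\bm 1$ and $c = \bm 1^{\mathrm T}\bm y/\mu$, giving $\mu\bm y = A_L\bm y + \tfrac{m}{\mu}(\bm 1^{\mathrm T}\bm y)\bm 1$. Hitting with $\bm 1^{\mathrm T}(\mu I - A_L)^{-1}$ and setting $g := \bm 1^{\mathrm T}\bm y$: $g = \tfrac{m}{\mu}g\cdot \bm 1^{\mathrm T}(\mu I - A_L)^{-1}\bm 1$, so $\mu = m\,\bm 1^{\mathrm T}(\mu I - A_L)^{-1}\bm 1$. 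Expanding: $\mu = \tfrac{m}{\mu}\sum_{k\ge 0}\mu^{-k}\ell_k$, i.e.
\[
\mu^2 = m\Big((r-2) + \frac{\ell_1}{\mu} + \frac{\ell_2}{\mu^2} + O(\mu^{-3})\Big) = m(r-2) + \frac{m\ell_1}{\mu} + O(1),
\]
and since $m(r-2) = \gamma_{n,r}^2$ this has leading root $\mu \approx \pm\gamma_{n,r}$.

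To finish, I would solve this asymptotically by writing $\mu = \epsilon\gamma_{n,r} + a_0 + a_1\gamma_{n,r}^{-1} + O(\gamma_{n,r}^{-2})$ with $\epsilon = \pm 1$ (for $\lambda_1$ and $\lambda_n$ respectively), substitute into $\mu^2 = \gamma_{n,r}^2 + m\ell_1\mu^{-1} + m\ell_2\mu^{-2} + O(m\mu^{-3})$, and match powers of $\gamma_{n,r}$. The $\gamma_{n,r}^1$ terms give $2\epsilon a_0\gamma_{n,r} = m\ell_1\cdot(\epsilon\gamma_{n,r})^{-1}\cdot\gamma_{n,r} $ — more carefully $m\ell_1/\mu = m\ell_1/(\epsilon\gamma_{n,r}) + O(\gamma_{n,r}^{-2}) = \ell_1(r-2)^{-1}\epsilon\gamma_{n,r} + O(1)$ using $m = \gamma_{n,r}^2/(r-2)$, so $2\epsilon a_0 = \epsilon\ell_1/(r-2)$, giving $a_0 = \ell_1/(2(r-2))$ independent of $\epsilon$. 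Then $\lambda_1 - \lambda_n = (\mu_+ - \mu_-) = 2\gamma_{n,r} + (a_1^+ - a_1^-)\gamma_{n,r}^{-1} + O(\gamma_{n,r}^{-2})$ — the $a_0$ terms cancel — and computing the next order coefficient from the $\gamma_{n,r}^0$-matching (which involves $a_0^2$, $a_1$, and $\ell_2$) yields $a_1^{\pm}$ with $a_1^+ - a_1^- = \tfrac{1}{r-2}\big(\ell_2 - \tfrac{3}{4(r-2)}\ell_1^2\big)$, which is exactly the claimed formula. \textbf{The main obstacle} is bookkeeping: one must be scrupulous about which quantities carry $n$-dependence ($m$ and hence $\gamma_{n,r}$) versus which are $O_r(1)$ constants ($\ell_1, \ell_2$, all entries of $A_L$), and one must justify that the eigenvectors for $\lambda_1(G)$ and $\lambda_n(G)$ really are constant on $U$ — this needs $\lambda_1(G), \lambda_n(G) \ne 0$, which follows from Lemma~\ref{lem:bounds-lambda-1-n} — and that these extremal eigenvalues are the extremal roots of the reduced equation rather than some spurious root, again handled by the magnitude estimate $|\mu| = \Theta(\sqrt n) \gg \lambda_1(A_L)$. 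The error term $O(\gamma_{n,r}^{-3})$ in the final statement then propagates cleanly from the $O(\mu^{-3})$ tail of the Laurent expansion.
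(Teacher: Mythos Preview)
Your approach is essentially the same as the paper's: both use the structure theorem to reduce to the block form, derive the scalar equation $\mu^2 = \gamma_{n,r}^2 + \sum_{k\ge 1} (n-r+2)\ell_k\,\mu^{-k}$ via the Laurent expansion of $(\mu I - A_L)^{-1}$, and then extract the asymptotics of the two large roots. The paper outsources the last step to \cite[Lemma~27]{LLLW} and \cite[Lemma~11]{BGWLL}, while you do the coefficient matching by hand; the resulting $c_1 = \ell_1/(2(r-2))$ and $2c_2 = \tfrac{1}{r-2}\big(\ell_2 - \tfrac{3}{4(r-2)}\ell_1^2\big)$ agree.

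One point deserves a sentence more of care: you write $s(G) = 2\gamma_{n,r} + (a_1^+ - a_1^-)\gamma_{n,r}^{-1} + O(\gamma_{n,r}^{-2})$ and then assert the error is actually $O(\gamma_{n,r}^{-3})$. That upgrade is correct but not automatic from the $O(\mu^{-3})$ tail alone --- it requires that the $\gamma_{n,r}^{-2}$ coefficients $a_2^+$ and $a_2^-$ coincide. This follows from a parity argument: since $a_0$ is $\epsilon$-independent and $a_1 = \epsilon b$ for some $b$ independent of $\epsilon$, matching the $\gamma_{n,r}^{-1}$ terms in $\mu^2 = \gamma_{n,r}^2 + \sum_k m\ell_k\mu^{-k}$ shows both sides are $\epsilon$ times an $\epsilon$-independent quantity, forcing $a_2^+ = a_2^-$. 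The paper does not spell this out either, relying instead on the cited lemma from \cite{LLLW}, which encodes exactly this alternating structure.
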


\begin{proof}
Let $\lambda$ be a non-zero eigenvalue of the adjacency matrix of $G$, and
let $\bm{y}$ be a normalized eigenvector corresponding to $\lambda$ such that $y_{v_{r-1}} = \cdots = y_{v_n} = 1$.
Let $A_L$ denote the adjacency matrix of the induced subgraph $G[L]$,
and let $\bm{y}_L$ denote the restriction of $\bm{y}$ to the vertices of $L$.

By Theorem \ref{thm:structure-kr-minor-free-max-spread}, the adjacency matrix of $G$ can be written as
\begin{equation}\label{eq:adj-mat-G}
A(G) = 
\begin{bmatrix}
A_L & J \\
J^{\mathrm{T}} & O 
\end{bmatrix}, ~~ 
J\in\mathbb{R}^{(r-2)\times (n-r+2)}.
\end{equation}
Since $\lambda$ is an eigenvalue of $A(G)$, we have
\[
\begin{bmatrix}
A_L & J \\
J^{\mathrm{T}} & O 
\end{bmatrix}
\begin{bmatrix}
\bm{y}_L \\ \bm{1}
\end{bmatrix}
= \lambda
\begin{bmatrix}
\bm{y}_L \\ \bm{1}
\end{bmatrix}.
\]
As a consequence,
\begin{equation}\label{lem:spread-series-expansion-eq:2}
A_L\bm{y}_L + (n-r+2) \cdot\bm{1} = \lambda\bm{y}_L.
\end{equation}
If $|\lambda| > \lambda_1(A_L)$, in light of \eqref{lem:spread-series-expansion-eq:2} we have
\begin{align}\label{lem:spread-series-expansion-eq:3}
\bm{y}_L 
& = (n-r+2) \cdot \left(\lambda I-A_L\right)^{-1} \bm{1} \nonumber \\
& = (n-r+2)\cdot \lambda^{-1} \left(I-\lambda^{-1} A_L\right)^{-1} \bm{1} \nonumber \\
& = (n-r+2)\cdot \lambda^{-1} \sum_{k=0}^\infty \left(\lambda^{-1} A_L\right)^k \bm{1} \nonumber \\
& = (n-r+2)\cdot \sum_{k=0}^\infty \lambda^{-(k+1)} A_L^k\bm{1}.
\end{align}
Here we use the assumption that $|\lambda| >\lambda_1(A_L)$ so that the infinite series converges.
By the eigen-equation at vertex $v_{r-1}$ and \eqref{lem:spread-series-expansion-eq:3}, we have
\begin{align}\label{lem:spread-series-expansion-eq:4}
\lambda = \lambda y_{v_{r-1}} = \bm{1}^{\mathrm{T}}\cdot\bm{y}_L
& = \bm{1}^{\mathrm{T}} \cdot (n-r+2)\cdot \sum_{k=0}^\infty \lambda^{-(k+1)} A_L^k \bm{1} \nonumber \\
& = (n-r+2) \cdot \sum_{k=0}^\infty \lambda^{-(k+1)} \bm{1}^{\mathrm{T}} A_L^k\bm{1} \nonumber \\
& = \frac{\gamma_{n,r}^2}{\lambda} + (n-r+2)\cdot\sum_{k=1}^\infty \lambda^{-(k+1)}\bm{1}^{\mathrm{T}} A_L^k\bm{1}.
\end{align}

Set $\ell_k := \bm{1}^{\mathrm{T}} A_L^k\bm{1}$ and $a_k := (n-r+2)\ell_k$ for $k\geq 0$.
Notices that $\ell_1 = \bm{1}^{\mathrm{T}} A_L\bm{1} = \sum_{v\in L} d_{G[L]}(v) = 2\left|E(G[L])\right|$ 
and $\ell_2=\bm{1}^{\mathrm{T}} A_L^2\bm{1} = \sum_{v\in L} d_{G[L]} (v)^2$.
It follows from \eqref{lem:spread-series-expansion-eq:4} that
\begin{equation}\label{eq:spread-series-expansion-eq:5}
\lambda^2 = \gamma_{n,r}^2 + \sum_{k=1}^\infty \frac{a_k}{\lambda^k}.
\end{equation}

According to Lemma \ref{lem:bounds-lambda-1-n}, both $\lambda_1(G)$ and $\lambda_n(G)$ satisfy 
Equation \eqref{eq:spread-series-expansion-eq:5}. The next claim demonstrates that these are the only eigenvalues that do so.

\begin{claim}\label{calm:bounds-lambda-i}
$|\lambda_i(G)| \leq \lambda_1(A_L)$ for $2 \leq i\leq n-1$.
\end{claim}

\begin{proof}
In view of \eqref{eq:adj-mat-G}, we see
\[
A(G) = 
\begin{bmatrix}
A_L & J \\
J^{\mathrm{T}} & O 
\end{bmatrix} 
= \begin{bmatrix}
A_L & O \\
O & O 
\end{bmatrix}
+ \begin{bmatrix}
O & J \\
J^{\mathrm{T}} & O 
\end{bmatrix}.
\]
Clearly, the eigenvalues of the matrix 
$\left[\begin{smallmatrix}
O & J \\
J^{\mathrm{T}} & O 
\end{smallmatrix}\right]$
are $\gamma_{n,r}$, $-\gamma_{n,r}$, and $0$ with multiplicity $n-2$.
By Weyl's inequalities \cite[Theorem 4.3.1]{Horn-Johnson2012}, for $i=2,\ldots,n-1$, we obtain
\[
\lambda_i(A(G)) \leq \lambda_1(A_L) 
+ \lambda_i\left(\left[
\begin{smallmatrix}
O & J \\
J^{\mathrm{T}} & O 
\end{smallmatrix}
\right]\right)
= \lambda_1(A_L),
\]
and
\[
\lambda_i(A(G)) \geq \lambda_n(A_L) + \lambda_i
\left(\left[
\begin{smallmatrix}
O & J \\
J^{\mathrm{T}} & O 
\end{smallmatrix}
\right]\right)
= \lambda_n(A_L) \geq -\lambda_1(A_L).
\]
Hence $|\lambda_i(G)|\leq |\lambda_1(A_L)|$ for $2 \leq i \leq n - 1$.
\end{proof}

By Lemma 27 in the appendix of \cite{LLLW}, $\lambda_1(G)$ has the following series expansion:
\[
\lambda_1(G) = \gamma_{n,r}+c_1+\frac{c_2}{\gamma_{n,r}} + O\left(\frac{1}{\gamma_{n,r}^2}\right),
\]
while $\lambda_n(G)$ is given by the series expansion:
\begin{align*}
\lambda_n(G) = -\gamma_{n,r} + c_1 - \frac{c_2}{\gamma_{n,r}} + O\left(\frac{1}{\gamma_{n,r}^2}\right).
\end{align*}
Here, $c_1$ and $c_2$ are determined from \eqref{eq:spread-series-expansion-eq:5} and  \cite[Lemma 11]{BGWLL} as 
\begin{align*}
c_1 & = \frac{a_1}{2\gamma_{n,r}^2} = \frac{\ell_1}{2(r-2)}, \\
c_2 & = -\frac{3}{8}\Big(\frac{a_1}{\gamma_{n,r}^2}\Big)^2 + \frac{1}{2} \frac{a_2}{\gamma_{n,r}^2} 
= -\frac{3}{8(r-2)^2} \ell_1^2 + \frac{1}{2(r-2)} \ell_2.
\end{align*}
Therefore, the spread of $G$ is
\begin{align*}
s(G) & = 2\gamma_{n,r} + \frac{2c_2}{\gamma_{n,r}} + O\left(\frac{1}{\gamma_{n,r}^3}\right) \\
& = 2\gamma_{n,r} +  \frac{1}{r-2}\left(-\frac{3}{4(r-2)}\ell_1^2+\ell_2\right) 
\cdot \frac{1}{\gamma_{n,r}} + O\left(\frac{1}{\gamma_{n,r}^3}\right).
\end{align*}

This completes the proof of Lemma \ref{lem:spread-series-expansion}.
\end{proof}

The following lemma is needed.

\begin{lemma}[\cite{Caen,Das}]\label{lem:Zagreb-index}
Let $H$ be a simple graph of order $n$ with $m$ edges. Then
\[
\sum_{i=1}^n{d(v_i)^2} \leq m \left( \frac{2m}{n-1} + n-2 \right),
\]
with equality holding if and only if $H \cong K_{1,n-1}$ or $H \cong K_n$ or $H \cong K_{n-1} \cup K_1$.
\end{lemma}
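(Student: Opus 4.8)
The plan is to prove this purely combinatorially, treating it as an extremal bound on the sum of squared degrees (the first Zagreb index) in terms of the number of vertices and edges. First I would write $\sum_{i=1}^n d(v_i)^2 = \sum_{i=1}^n d(v_i)\, d(v_i)$ and use the Cauchy--Schwarz or the handshake identity to relate the problem to counting paths of length two. Specifically, $\sum_i d(v_i)^2 = \sum_i d(v_i) + 2\,(\text{number of paths } P_3)$, but the cleaner route is the classical double-counting: for each edge $uv$, $d(u)+d(v) \le (\text{number of vertices incident to } u \text{ or } v) + (\text{common neighbours counted once extra})$, and summing $d(u)+d(v)$ over all edges gives exactly $\sum_i d(v_i)^2$. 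Thus I would start from the identity
\[
\sum_{i=1}^n d(v_i)^2 = \sum_{uv\in E(H)} \big(d(u)+d(v)\big).
\]

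Next, for each edge $uv\in E(H)$, I would bound $d(u)+d(v)$ from above. Every vertex counted by $d(u)+d(v)$ is a neighbour of $u$ or of $v$; vertices adjacent to both are counted twice. The set $N(u)\cup N(v)$ has size at most $n$, but we can do better: $u\in N(v)$ or $v\in N(u)$ forces at least one "wasted" slot, and more carefully $d(u)+d(v) = |N(u)|+|N(v)| = |N(u)\cup N(v)| + |N(u)\cap N(v)| \le n + |N(u)\cap N(v)|$. Summing over all $m$ edges yields $\sum_i d(v_i)^2 \le mn + \sum_{uv\in E} |N(u)\cap N(v)|$, and $\sum_{uv\in E}|N(u)\cap N(v)|$ is exactly the number of triangles times $3$, i.e.\ $3t(H)$ where $t(H)$ counts triangles. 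That overshoots the claimed bound, so I would instead sharpen the per-edge estimate: if $d(u)+d(v) > n$ then $u$ and $v$ share at least $d(u)+d(v)-n$ common neighbours, but these common neighbours in turn have degree at least $2$ — in fact each common neighbour $w$ of $u,v$ contributes to the count $d(w)\ge 2$, and careful bookkeeping of the contributions around each such $w$, together with $\sum_i d(v_i) = 2m$, should yield the stated inequality after rearranging. The cleanest known argument (due to de Caen) replaces the $|N(u)\cup N(v)|\le n$ step with $d(u)+d(v) \le n + d(w)$ for a carefully chosen witness, then averages; I would follow that line.

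Concretely, the de Caen argument runs: for each edge $e=uv$, we have $d(u)+d(v)-2 \le n-2 + |N(u)\cap N(v)|$ is too weak, so instead note $\sum_{uv\in E}\big(d(u)+d(v)\big) \le \sum_{uv\in E}\big(n-2 + $ number of common neighbours $+2\big)$ and then bound $\sum_{uv}|N(u)\cap N(v)| \le \sum_{\text{vertices }w}\binom{d(w)}{2}\cdot\frac{2}{?}$... I would in fact use the slicker path: apply Cauchy--Schwarz in the form $\sum_i d(v_i)^2 = \sum_{uv\in E}(d(u)+d(v))$ and then, letting $c(e)$ be the number of common neighbours of the endpoints of $e$, write $\sum_{uv\in E}(d(u)+d(v)) = \sum_{uv\in E}\big(|N(u)\cup N(v)| + c(e)\big) \le m(n-?) + \sum_e c(e)$, combined with the bound $\sum_e c(e)$ controlled by convexity of $\binom{d(v)}{2}$, i.e.\ $\sum_e c(e) = \sum_v \binom{d(v)}{2}$-type counting, and then Jensen. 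The final rearrangement, using $\sum_v d(v)=2m$, collapses to $m\big(\tfrac{2m}{n-1}+n-2\big)$.

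The main obstacle I anticipate is getting the $\tfrac{2m}{n-1}$ term sharp rather than a weaker $\tfrac{2m}{n}$: this requires the observation that when summing $|N(u)\cup N(v)|$ over edges, one cannot use $n$ uniformly — one must account for the fact that isolated-ish vertices reduce the effective ambient size, and the extremal configurations ($K_{1,n-1}$, $K_n$, $K_{n-1}\cup K_1$) are precisely where every inequality in the chain is tight. I would therefore track the equality conditions carefully at each step: the union bound $|N(u)\cup N(v)|\le n-1$ (never $n$, since $u,v$ themselves overlap the neighbourhoods appropriately) being tight forces high connectivity, and the convexity/Jensen step being tight forces near-regularity or the star. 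Reconciling these two tightness requirements is what pins down exactly the three extremal graphs claimed in the statement; this equality analysis, rather than the inequality itself, is where the care is needed, though since the lemma is cited from \cite{Caen, Das} I would ultimately just invoke it. (In the paper's actual usage only the inequality for $H = G[L] \subseteq K_{r-2}$ matters, which is the trivial bounded-order case, so for our purposes the full strength and equality analysis can simply be quoted.)
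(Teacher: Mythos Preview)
The paper gives no proof of this lemma; it is stated with citations to de~Caen and Das and then used as a black box in the proof of the main theorem. Since your proposal ultimately lands on ``just invoke it,'' you and the paper agree in practice.

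Your sketch toward an independent proof, however, does not close. The starting identity $\sum_i d(v_i)^2 = \sum_{uv\in E}\bigl(d(u)+d(v)\bigr)$ is correct, but from there you oscillate between bounds that are too weak (the $|N(u)\cup N(v)|\le n$ route gives only $\sum_i d_i^2 \le mn + 3t(H)$, as you yourself note) and steps that are not valid: $\sum_{uv\in E}|N(u)\cap N(v)|$ equals three times the number of triangles, whereas $\sum_v \binom{d(v)}{2}$ counts paths of length two; these are different quantities, so the ``$\sum_e c(e) = \sum_v\binom{d(v)}{2}$-type counting plus Jensen'' line does not go through as written. None of the routes you outline actually produces the sharp $\tfrac{2m}{n-1}$ term, and the equality characterisation is left untouched. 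This is immaterial for the paper's purposes --- the lemma is only applied to $G[L]\subseteq K_{r-2}$, a graph on a bounded number of vertices, and citing the literature is exactly what the authors do.
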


Now, we are ready to prove our main theorem.
\vspace{3mm}

\noindent {\it Proof of Theorem \ref{thm:spread-kr-minor-free}}.
Let $G$ be a graph achieving the maximum spread among all $K_r$-minor free graphs 
of order $n$. It suffices to show that $G = K_{r-2} \vee (n-r+2) K_1$. The assertion 
follows directly from Theorem \ref{thm:structure-kr-minor-free-max-spread} for $r=3$. 
From this point forward, we assume $r\geq 4$. Recall that 
$\ell_1 = \sum_{v\in V(L)} d_L(v)$ and $\ell_2 = \sum_{v\in V(L)} d_L(v)^2$. 
By \eqref{lem:spread-series-expansion-eq:1}, if $G[L] \cong K_{r-2}$, then 
\begin{equation}\label{thm:max-spread-kr-minor-free-eq:1}
c_2 = \frac{1}{2(r-2)} \cdot \left(\ell_2-\frac{3}{4(r-2)}\ell_1^2\right) = \frac{(r-3)^2}{8}.
\end{equation}

We now proceed by contradiction to show that $G[L] \cong K_{r-2}$. Suppose this is not the case.
Using \eqref{lem:spread-series-expansion-eq:1} and Lemma \ref{lem:Zagreb-index}, we obtain
\begin{align*}\label{valuec2}
c_2 & = \frac{1}{2(r-2)} \cdot \left(\ell_2-\frac{3}{4(r-2)}\ell_1^2\right) \\
& \leq \frac{1}{2(r-2)} \cdot \left(\frac{\ell_1}{2}\left(\frac{\ell_1}{r-3}+r-4\right) - \frac{3}{4(r-2)}\ell_1^2\right) \\
& = \frac{1}{2(r-2)} \cdot \left(\left(\frac{1}{2(r-3)}-\frac{3}{4(r-2)}\right)\ell_1^2 + \frac{r-4}{2}\ell_1 \right) \\
& = \frac{1}{2(r-2)} \cdot \left(\frac{-r+5}{4(r-2)(r-3)}\ell_1^2 + \frac{r-4}{2}\ell_1\right) \\
& < \frac{(r-3)^2}{8},
\end{align*}
where the last inequality holds because $G[L]$ is a proper subgraph of $K_{r-2}$,
and $$f(\ell_1):= \frac{-r+5}{4(r-2)(r-3)}\ell_1^2 + \frac{r-4}{2}\ell_1$$ is monotonically increasing as a function of $\ell_1$ over the interval $[0,+\infty)$ 
when $r\in\{4,5\}$, and over the interval $[0, (r-2)(r-3)(r-4)/(r-5)]$ when $r\geq 6$. Hence, 
$s(G) < s(K_{r-2} \vee (n-r+2) K_1)$ by Lemma \ref{lem:spread-series-expansion} and \eqref{thm:max-spread-kr-minor-free-eq:1}, 
a contradiction. 

This completes the proof of Theorem \ref{thm:spread-kr-minor-free}. \hfill $\Box$

\vspace{5mm}
\noindent{{\bf Acknowledgements}.}
The authors would like to thank Prof. Mingqing Zhai for his helpful discussions and comments.



\end{document}